\newtheorem{theorem}{Theorem}[section]
\newtheorem{pro}[theorem]{Proposition}
\newtheorem{cor}[theorem]{Corollary}
\theoremstyle{definition}
\newtheorem{definition}[theorem]{Definition}
\theoremstyle{remark}
\newtheorem{remark}[theorem]{Remark}
\numberwithin{equation}{section}
\renewcommand{\to}{\rightarrow}
\newcommand{\restrict}{\upharpoonright}
\newcommand{\da}{\!\downarrow\,}
\begin{document}
\title{Kolmogorov complexity and the Recursion Theorem}

\author{Bj\o rn Kjos-Hanssen\footnote{This material is based upon work supported by the National Science Foundation under Grants No.\ 0652669 and 0901020.}\\
Wolfgang Merkle\\
Frank Stephan}
\maketitle

\begin{abstract}
	Several classes of diagonally non-recursive (DNR) functions are characterized in terms
	of Kolmogorov complexity. In particular, a set of natural numbers
	$A$ can wtt-compute a DNR function iff there is a nontrivial
	recursive lower bound on the Kolmogorov complexity of the
	initial segments of $A$. Furthermore, $A$ can Turing
	compute a DNR function iff there is a nontrivial $A$-recursive
	lower bound on the Kolmogorov complexity of the initial segements
	of $A$. $A$ is PA-complete, that is, $A$ can compute a
	$\{0,1\}$-valued DNR function, iff $A$ can compute a function $F$
	such that $F(n)$ is a string of length $n$ and maximal $C$-complexity
	among the strings of length $n$. $A \geq_T K$ iff $A$ can compute a
	function $F$ such that $F(n)$ is a string of length $n$ and maximal
	$H$-complexity among the strings of length $n$. Further characterizations
	for these classes are given.
	The existence of a DNR function in a Turing degree is equivalent to the
	failure of the Recursion Theorem for this degree; thus the provided
	results characterize those Turing degrees in terms of Kolmogorov
	complexity which do no longer permit the usage of the Recursion Theorem.
\end{abstract}

\section{Introduction}
\noindent
The Recursion Theorem can be stated in two ways: First one can
say that every recursive function $f$ has a fixed-point with
respect to any given acceptable numbering of all r.e.\ sets:
$\exists e\,[W_e = W_{f(e)}]$. Second one can say that
every total recursive function $f$ coincides
at some places with the diagonal function:
$\exists e \,[\varphi_e(e) \da = f(e)]$. Jockusch \cite{Jockusch:89}
showed that these two variants of the Recursion Theorem are also
equivalent relative to any oracle $A$:
Every function $f \leq_T A$ admits a fixed-point iff every function
$g \leq_T A$ coincides with the diagonal function somewhere.
Special attention has been given to the oracles $A$ which
permit to avoid the Recursion Theorem and the topic of the
the present work is to relate these oracles to notions of
Kolmogorov complexity; namely to the below introduced classes
of complex and autocomplex sets where one can give nontrivial
lower bound on the complexity of initial segments of the set
observed. The formal definitions of fixed-point
free and diagonally nonrecursive functions are as follows.

\begin{definition}
A function $f$ is called fixed-point free if
$W_x \neq W_{f(x)}$
for all $x$, where $W_x$ is the $x$th recursively enumerable set.
The partial recursive function mapping $x$ to $\varphi_x(x)$ whenever
defined is called the diagonal function, where $\varphi_x$ is the
$x$th partial recursive function. A function $g$ is called
diagonally nonrecursive $($DNR$)$
iff it is total and differs from the diagonal function on its domain.
\end{definition}

\noindent
We also relate complexity to sparseness: Suppose sets $A$ and $B$
are given such that $A \leq_{wtt} B$ and $B$ is hyperimmune.
Then $A$ is computed by querying rather few bits from $B$ and $B$ has
very long intervals consisting of zeroes only. So when computing $A(x)$
for many inputs $x$, the oracle $B$ will answer ``$0$'' on all questions
that have not been asked before, hence $B$ will not be of much
use. We show that this can happen for some $B$ just in case $A$ has low
Kolmogorov complexity in a certain sense. This is again equivalent
to the statement that the Recursion Theorem applies to recursively
bounded functions wtt-reducible to $A$.

A further topic investigated is to determine how difficult it is to
compute for each $n$ a string of maximal Kolmogorov complexity within
$\{0,1\}^n$. While for the case of plain Kolmogorov complexity the answer
to this problem depends on the universal machine under consideration,
in the case of prefix-free Kolmogorov complexity the answer is that
the problem to compute such strings is as difficult as to solve the
halting problem.

Besides this we study related questions and are able to characterise
the oracles which are high or autocomplex; furthermore, related
characterisations are obtained for the oracles which are high or
PA-complete. In the last section we study the notion of r.e.\ traceable
sets which are, roughly spoken, some type of notion opposite to the notion of autocomplex sets.

\section{Complex sets}

\noindent
A set will be called {\em complex} if the prefixes of its characteristic
function have a nontrivial lower bound computed by some recursive function;
this notion is a weakening of the corresponding characterization
for {\em randomness} where this lower bound is just the length of the
prefix minus some constant. Recall that $C$ is the plain and $H$ the
prefix-free Kolmogorov complexity.

Recall that a function $g:\omega\to\omega$ is called an \emph{order function} if it is recursive, nondecreasing, and unbounded. For a set $A$, we say that the function $g:\omega\to\omega$ is an $A$-\emph{order function} if it is recursive in $A$, nondecreasing, and unbounded.

\begin{definition}
A set $A$ is \emph{complex} if there is an order function $g$ such that
$C(A\restrict y)\ge g(y)$ for all $y$.

A set $A$ is \emph{autocomplex}
if there is an $A$-order function $g$ such that $C(A\restrict y)\ge g(y)$ for all $y$.
\end{definition}

Here $A\restrict x$ is the finite
binary sequence $A(0)\ldots A(x-1)$.

\begin{pro}\label{prop:autocomplex}
For any set~$A$, the following conditions are equivalent.
\begin{itemize}
\item[{\rm(1)}]
The set~$A$ is autocomplex.
\item[{\rm(2)}]
There is an $A$-recursive function~$h$ such that for all~$n$, 
${C}(A\restrict h(n))\ge n$.
\item[{\rm(3)}]
There is an $A$-recursive function~$f$ such that for all~$n$, ${C}(f(n))\ge n$. 
\end{itemize}
\end{pro}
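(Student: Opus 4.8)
The plan is to prove the cyclic chain of implications $(1)\Rightarrow(2)\Rightarrow(3)\Rightarrow(1)$; the first two are bookkeeping, and the content lies in $(3)\Rightarrow(1)$.

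For $(1)\Rightarrow(2)$ I would invert the order function: given an $A$-order function $g$ with $C(A\restrict y)\ge g(y)$ for all $y$, put $h(n)=\min\{y:g(y)\ge n\}$. This is well defined because $g$ is unbounded, it is $A$-recursive because $g$ is, and $C(A\restrict h(n))\ge g(h(n))\ge n$. For $(2)\Rightarrow(3)$ I would take $f(n)=A\restrict h(n)$, the actual initial segment $A(0)\cdots A(h(n)-1)$: from oracle $A$ one computes $h(n)$ and then reads off those bits, so $f$ is $A$-recursive, and $C(f(n))=C(A\restrict h(n))\ge n$.

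For $(3)\Rightarrow(1)$ I would argue through the use function of the reduction. Fix an oracle procedure $\Phi$ with $\Phi^A=f$ and let $u(n)$ be the least $\ell$ such that $\Phi^{A\restrict \ell}(n)$ converges; since $f$ is total, $u$ is total and $A$-recursive. Replacing $u(n)$ by $\max\bigl(n,\max_{m\le n}u(m)\bigr)$, I may assume in addition that $u$ is nondecreasing and unbounded. The key observation is that if $u(n)\le y$ then $\Phi^{A\restrict y}(n)=f(n)$, so $f(n)$ is computable from the pair $(A\restrict y,n)$; hence, for a suitable fixed constant $c$,
\[
C(A\restrict y)\ \ge\ C(f(n))-2\lceil\log_2(n+1)\rceil-c\ \ge\ n-2\lceil\log_2(n+1)\rceil-c\qquad\text{whenever }u(n)\le y .
\]
Now set $n(y)=\max\{n:u(n)\le y\}$, with the convention that this maximum — and then $g(y)$ — equals $0$ for the finitely many $y<u(0)$; since $u$ is $A$-recursive, nondecreasing, and unbounded, $n(y)$ is an $A$-recursive, nondecreasing, unbounded function of $y$. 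As $u(m)\le u(n(y))\le y$ for every $m\le n(y)$, the displayed bound applies with $n$ replaced by any such $m$; so, writing $\psi(m)=\max\bigl(0,\,m-2\lceil\log_2(m+1)\rceil-c\bigr)$, the function
\[
g(y)=\max_{m\le n(y)}\psi(m)
\]
is $A$-recursive, nondecreasing, and unbounded (because $\psi(m)\to\infty$ and $n(y)\to\infty$), and it satisfies $g(y)\le C(A\restrict y)$ for all $y$; hence $A$ is autocomplex.

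The main obstacle is this last implication: one must convert the hypothesis that $A$ computes a stream of complex strings $f(n)$ into the conclusion that the initial segments of $A$ are themselves complex, and the natural bridge is the use function of the computation of $f$. Two minor technical points arise — the $O(\log n)$ loss incurred by transmitting the index $n$ alongside $A\restrict y$, and the need to turn the resulting pointwise inequality into a genuinely nondecreasing function while handling the short prefixes — but both are harmless, because autocomplexity requires only the existence of some $A$-order function, leaving us free to subtract the logarithmic term and to take running maxima.
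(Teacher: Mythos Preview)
Your proof is correct and follows essentially the same route as the paper's: the cycle $(1)\Rightarrow(2)\Rightarrow(3)\Rightarrow(1)$, with the first two steps by inverting the order function and reading off initial segments, and the last step by bounding~$C(A\restrict y)$ from below via the use function of the reduction computing~$f$. The only cosmetic difference is that the paper absorbs the logarithmic loss by passing to $n/2$ and then fixes monotonicity by a finite variation, whereas you keep the explicit term $n-2\lceil\log_2(n+1)\rceil-c$ and enforce monotonicity by taking a running maximum; both are equally valid ways to handle the same bookkeeping.
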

\begin{proof}
We show~$(1) \Rightarrow (2) \Rightarrow (3) \Rightarrow (1)$.
Given an autocomplex set~$A$, choose an $A$-recursive order~$g$ where $C(A\restrict n)\ge g(n)$ and in order to obtain a function~$h$ 
as required by~(2), let
\[
h(n) = \min\{l  \colon   g(l) \ge n  \} .
\]
Given a function~$h$ as in~(2), in order to obtain a function~$f$ as required by~(3), simply let~$f(n)$ be equal to (an appropriate encoding of) the prefix of~$A$ of length~$h(n)$. Finally, given an $A$-recursive function~$f$ as in~(3), let~$u(n)$ be an $A$-recursive order such that some fixed oracle Turing machine~$M$ computes~$f$ with oracle~$A$ such that~$M$ queries on input~$n$ only bits~$A(m)$ of A where~$m\le u(n)$. Then for any~$l \ge u(n)$, 
the value of~$f(n)$ can be computed from~$n$ and~$A\restrict {l}$, hence
\[
n \le {C}(f(n)) \le^{+} {C}(A\restrict {l}) + 2 \log n ,
\]
and thus for almost all~$n$ and all~$l \ge u(n)$, we have~$n/2 \le {C}(A\restrict {l})$. As a consequence, a finite variation of the $A$-recursive order
\[g\colon n \mapsto \max\{l \colon u(l) \le n \}/2\]
witnesses that~$A$ is autocomplex.
\end{proof}

\noindent
The next result shows that the notions of complex and autocomplex sets
can be characterized in terms of the degrees of DNR functions. In the
case of complex sets, one can recast this notion in terms of the
degrees of recursively bounded DNR functions. because a complex set can tt-compute
only functions which are recursively bounded. There are Turing degrees
which contain DNR functions but no recursively bounded functions \cite{AKLS},
hence some Turing degrees consist of autocomplex sets
without containing any complex set.

\begin{theorem} \label{th:complex}
Let $A$ be any set. 
\begin{itemize}
\item[\rm 1.] $A$ is autocomplex iff $A$ computes a DNR function.
\item[\rm 2.] $A$ is complex iff $A$ $tt$-computes a DNR function
          iff $A$ $wtt$-computes a DNR function.
\end{itemize}
\end{theorem}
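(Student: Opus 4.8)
The plan is to close a cycle of implications, with the ``complexity $\Rightarrow$ DNR'' directions being routine and the two converses handled by a single construction that is the same for the $wtt$/complex case and the Turing/autocomplex case, the only difference being whether an ambient use bound is recursive or merely $A$-recursive. Concretely: for item~1 I would prove autocomplex $\Rightarrow$ (computes DNR) $\Rightarrow$ autocomplex, and for item~2, complex $\Rightarrow$ ($tt$-computes DNR) $\Rightarrow$ ($wtt$-computes DNR) $\Rightarrow$ complex, the middle arrow being trivial.

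For the easy directions, note that $C(\varphi_e(e)) \le^{+} C(e)$ whenever $\varphi_e(e)\da$, so there is a recursive function $p$ with $C(\varphi_e(e)) \le p(e)$ for all such $e$. If $A$ is complex with witnessing recursive order~$q$, set $y(e) = \min\{y : q(y) > p(e)\}$ (recursive, as $q$ is unbounded) and $F(e) = A\restrict y(e)$ under a fixed coding of strings by numbers; then $C(F(e)) \ge q(y(e)) > p(e) \ge C(\varphi_e(e))$, so $F(e)\neq\varphi_e(e)$, and $F$ is a $tt$-reduction because it merely reads a recursively bounded initial segment of the oracle and is total for every oracle. Hence $A$ $tt$-computes a DNR function, and a fortiori a $wtt$ one. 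The autocomplex case is identical with $q$ an $A$-recursive order (equivalently, using clause~(3) of Proposition~\ref{prop:autocomplex}: take $e \mapsto f(p(e)+1)$), yielding a DNR function $\leq_T A$.

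For the substantive converses, let $g$ be DNR with $g \leq_{wtt} A$ (resp.\ $g \leq_T A$) computed by a reduction~$\Gamma$ whose use bound $u$ is recursive (resp.\ $A$-recursive); $u$ must be unbounded, since otherwise $g$ would be recursive and hence not DNR. Let $U$ be the universal machine for $C$. For each program~$\rho$ I would use the $s$-$m$-$n$ theorem to produce an index $\widehat\rho$, with the coding overhead $\rho\mapsto\widehat\rho$ recursively bounded, say $|\widehat\rho|\le|\rho|+c_0$, such that $\varphi_{\widehat\rho}(\widehat\rho)$ simulates~$\Gamma$ on input~$\widehat\rho$ using the string $U(\rho)$ in place of the oracle, converging to that value precisely when $U(\rho)\da$ is a string long enough to answer every query the simulation makes. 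Since $g$ is DNR, $g(\widehat\rho)\ne\varphi_{\widehat\rho}(\widehat\rho)$ whenever the right side converges. Now suppose, for some~$k$, that $\rho$ is a program with $U(\rho)=A\restrict k$ and $k$ is at least the use of~$\Gamma$ on input~$\widehat\rho$ with oracle~$A$: then the simulation queries only bits below that use, all answered consistently with~$A$, so $\varphi_{\widehat\rho}(\widehat\rho)=\Gamma^{A}(\widehat\rho)=g(\widehat\rho)$, a contradiction. Hence every program~$\rho$ with $U(\rho)=A\restrict k$ satisfies $u(\widehat\rho)>k$. Applying this to a shortest such program gives $k < u(\widehat\rho)$ with $\widehat\rho < 2^{C(A\restrict k)+c_0+1}$, hence $k < U^{*}(C(A\restrict k))$ where $U^{*}(m) := \max\{u(j) : j < 2^{m+c_0+1}\}$ is (resp.\ $A$-)recursive, nondecreasing, and unbounded. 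Inverting~$U^{*}$ yields an order (resp.\ $A$-order) function $h$ with $C(A\restrict k)\ge h(k)$ for all~$k$, so $A$ is complex (resp.\ autocomplex).

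The step I expect to require the most care is making the simulation inside $\varphi_{\widehat\rho}$ legitimate. In the $wtt$ case this is clean: since $u$ is recursive, $\varphi_{\widehat\rho}$ checks $|U(\rho)|\ge u(\widehat\rho)$ directly, and then the simulated value is a genuine fixed function of the first $u(\widehat\rho)$ oracle bits, which is exactly what lets ``$\Gamma^{A}(\widehat\rho)\ne\Gamma^{U(\rho)}(\widehat\rho)$'' be read as a disagreement between $A$ and $U(\rho)$ on those bits. In the Turing case $u(\widehat\rho)$ is not available to the oracle-free function $\varphi_{\widehat\rho}$, so instead $\varphi_{\widehat\rho}$ runs the oracle machine step by step with $U(\rho)$ as oracle and commits to an output only once the machine halts without ever having queried a position $\ge|U(\rho)|$; one then checks that on the critical input $U(\rho)=A\restrict k$ with $k$ exceeding the true use this still yields $g(\widehat\rho)$, as required. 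The remaining work is bookkeeping: verifying that the $s$-$m$-$n$ overhead is bounded by a recursive function uniformly in~$\rho$ so that $U^{*}$ is a total nondecreasing unbounded function and its inverse a bona fide (resp.\ $A$-)order function. Finally, I would add the remark that in item~2 one may replace ``DNR'' by ``recursively bounded DNR'' throughout, since a complex set $tt$-computes only recursively bounded functions.
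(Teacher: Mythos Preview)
Your proposal is correct and follows essentially the same approach as the paper: the ``complexity $\Rightarrow$ DNR'' direction via a complexity comparison $C(F(e))>C(\varphi_e(e))$, and the converse via an $s$-$m$-$n$ index $\widehat\rho$ that simulates the reduction with $U(\rho)$ as oracle, forcing the use to exceed the length of any description of the relevant initial segment. The only cosmetic differences are that the paper argues the converse by contrapositive (assuming not autocomplex and producing a failure of DNR) and implicitly invokes Proposition~\ref{prop:autocomplex}(2) rather than explicitly inverting a use-bound function, and in the forward direction it allows finitely many exceptions rather than using a recursive upper bound $p(e)$ to get DNR outright; your slightly more explicit bookkeeping is fine.
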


\begin{proof}
1. Suppose $A$ is not autocomplex, but computes a DNR function
$\varphi_{r}^A$. There is a recursive function $e(\cdot)$ such that
for any $x$ and $y$, if the universal machine $U$ on $x$ converges,
and additionally $\varphi^{U(x)}_{r}(y)$ converges, then
$\varphi_{e(x)}(y)=\varphi^{U(x)}_{r}(y)$.
For each $n$, let $g(n)$ be the maximum of the use of all computations
$\varphi_{r}^A(e(x))$ with
$|x|\le n$. Clearly $g$ is recursive in $A$. Since $A$ is not
autocomplex, there must be some $n$ such that $C(A\restrict g(n))\le
n$. If $x_n$ is a witnessing code, i.e. $U(x_n)=A\restrict g(n)$,
$|x_n|\le n$, then $\varphi_{r}^A(e(x_n))=\varphi_{e(x_n)}(e(x_n))$.
Thus $\varphi_r^A$ is not
DNR.

Now suppose $A$ is autocomplex. Let $g$ be an $A$-order function such that for all $n$, $C(A\restrict n)\ge g(n)$. 
Let $\gamma$ be an $A$-recursive function such that
$g(\gamma(n))\ge n$ for all $n$. Let $f(n)=A\restrict \gamma(n)$; $f$
is recursive in $A$. If $f(n)$ is infinitely often equal to
$\varphi_{n}(n)$ then for such $n$, $C(A\restrict
\gamma(n))\le \log n+c$ for some constant $c$. Letting $m=\gamma(n)$, 
\[
C(A\restrict m)\le\log n+c \le \log g(m)+c<g(m)
\]
for sufficiently
large $m$, which is a contradiction. Thus, we conclude that
$f(n)\ne\varphi_n(n)$ for all but finitely many $n$, so $f$ computes a
DNR function.

2. This is obtained by the same proof; if $A$ $wtt$-computes
$\varphi_r^A$ then the function $g$ is recursive, and if $g$ is
recursive then $f$ is $tt$-computable from $A$. \end{proof}

\noindent
Theorem~\ref{th:complexchar} below gives a variety of useful
characterizations of the complex sets. These characterizations
are mainly based on the following definition and result of Jockusch.

\begin{definition}[Jockusch \cite{Jockusch:89}]
A function $h:\omega\to\omega$ is called \emph{strongly DNR (SDNR)}
if for all $x$, $h(x)\ne\varphi_y(y)$ for all $y\le x$.
\end{definition}

\begin{theorem}[Jockusch \cite{Jockusch:89}]\label{nov192006}
Every DNR function computes an SDNR function.
\end{theorem}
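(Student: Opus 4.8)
The plan is to recast the statement as an amplification: it suffices to produce, uniformly in~$n$ and with oracle~$f$, a number $h(n)$ lying outside the finite set
\[
D_n \;=\; \{\,\varphi_y(y) \colon y \le n,\ \varphi_y(y)\da\}\,,
\]
since such an $h$ is, by definition, an SDNR function with $h\le_T f$. Note that $D_n$ is uniformly r.e.\ with $|D_n|\le n+1$, so the task is to let a DNR oracle escape, uniformly, a recursively bounded r.e.\ family of finite sets.

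The base case is the single-value escape from the $s$-$m$-$n$ theorem: there is a recursive~$\pi$ with $\varphi_{\pi(y)}(z)=\varphi_y(y)$ for all~$z$ (the everywhere-constant function with value~$\varphi_y(y)$, and divergent everywhere when $\varphi_y(y)\!\uparrow$), so since $f$ is DNR we get $f(\pi(y))\ne\varphi_y(y)$ for all~$y$; hence one query to~$f$ escapes one prescribed diagonal value, which settles the case $|D_n|\le 1$. The real work is to amplify this to an escape from all of $\varphi_0(0),\dots,\varphi_n(n)$ at once. I would do this by an $f$-recursive iteration: keep a current candidate~$c$, semidecide whether~$c$ has been enumerated into~$D_n$, and, as soon as it has, replace~$c$ by $f(e)$ for a fresh index~$e$. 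Since the iteration runs with oracle~$f$, the finite list of candidates produced so far is at its disposal, so by the $s$-$m$-$n$ theorem --- and, where self-reference is needed, by the Recursion Theorem, letting~$e$ refer to the iteration itself --- one can take $\varphi_e$ to compute, from that hardcoded list together with~$n$, a value that~$f$ is thereby driven off. As $|D_n|\le n+1$, only finitely many genuinely new candidates can appear before one is forced outside~$D_n$, and once a candidate lies outside~$D_n$ it is never revised; so the iteration terminates after finitely many~$f$-queries with the desired value~$h(n)$, and uniformity gives $h\le_T f$.

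The step I expect to be the genuine obstacle is designing the indices~$e$ so that the DNR property of~$f$ forces \emph{progress} --- a new candidate that is distinct not merely from its predecessor but from the whole bad set enumerated so far --- rather than letting~$f$ cycle forever through the finitely many elements of~$D_n$; a single query to~$f$ only excludes a single value, so the amplification must somehow leverage that the values to be avoided are themselves diagonal values (recall that a DNR oracle need not compute any $\{0,1\}$-valued DNR function, so no brute-force boosting is available). Granting this, the first paragraph turns the uniform $f$-recursive~$h$ into an SDNR function $\le_T f$, which is the theorem.
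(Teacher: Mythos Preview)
The paper does not prove this theorem; it merely cites Jockusch~\cite{Jockusch:89}. So there is nothing to compare against, and your proposal must stand on its own.

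It does not. You have correctly reformulated the task --- given a DNR~$f$, uniformly compute $h(n)\notin D_n=\{\varphi_y(y):y\le n,\ \varphi_y(y)\da\}$ --- and your base case via $\pi$ is fine. But the iterative procedure you describe has two fatal defects, one of which you flag yourself. First, termination: you propose to ``semidecide whether~$c$ has been enumerated into~$D_n$'' and revise only when it has. If the current candidate already lies outside~$D_n$, that semidecision never halts, so your algorithm waits forever and outputs nothing; ``never revised'' is not the same as ``halts and outputs''. Second, progress: as you concede, a single DNR query $f(e)$ only avoids the single value $\varphi_e(e)$, and you have given no mechanism preventing the sequence of candidates from cycling through the finitely many elements of~$D_n$. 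Invoking the Recursion Theorem with the list of past candidates hardcoded does not help, because you still cannot force $f(e)$ to differ from \emph{all} of them at once. So what you have is an outline with its crux missing, not a proof.

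The standard argument bypasses iteration entirely by a tupling trick. Fix a surjective $(n{+}1)$-tupling $\langle\,\cdot\,\rangle$ with recursive projections $(\cdot)_y$. By the $s$-$m$-$n$ theorem choose a recursive~$r$ with $\varphi_{r(n,y)}(x)\simeq(\varphi_y(y))_y$ for all~$x$. Since $f$ is DNR, $f(r(n,y))\ne(\varphi_y(y))_y$ whenever $\varphi_y(y)\da$. Now set
\[
h(n)=\bigl\langle f(r(n,0)),\,f(r(n,1)),\,\ldots,\,f(r(n,n))\bigr\rangle.
\]
For each $y\le n$ with $\varphi_y(y)\da$, the $y$-th projection of $h(n)$ differs from the $y$-th projection of $\varphi_y(y)$, so $h(n)\ne\varphi_y(y)$. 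Thus $h$ is SDNR and clearly $h\le_T f$ (indeed $h\le_{tt} f$). The point you were missing is that one avoids $n{+}1$ values simultaneously not by $n{+}1$ sequential corrections but by packaging $n{+}1$ independent single-value avoidances into one tuple.
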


\noindent
The characterizations of complex sets and Jockusch's result can
be extended to the following comprehensive characterization of
the complex sets.

\begin{theorem} \label{th:complexchar}
For a set $A$ the following conditions are equivalent.
\begin{enumerate}
\item[\rm(1)]
There is a SDNR function $f_1 \leq_{wtt} A$;
\item[\rm(2)]
For every recursive function $g$ there is a function
$f_2 \leq_{wtt} A$ such that for all recursive $h$
with $\forall n\,[|W_{h(n)}| < g(n)]$ it holds that
$\forall^{\infty} n\,[f_2(n) \notin W_{h(n)}]$;
\item[\rm(3)]
There is $f_3 \leq_{wtt} A$ such that for all partial-recursive
$V$ with domain $\{0,1\}^*$ and almost all $n$,
$f_3(n) \notin \{V(p): p \in \{0,1\}^* \wedge |p|<n\}$;
\item[\rm(4)]
There is $f_4 \leq_{wtt} A$ with $C(f_4(n)) \geq n$ for all $n$;
\item[\rm(5)]
There is a DNR function $f_5 \leq_{wtt} A$;
\item[\rm(6)]
$A$ is complex.
\end{enumerate}
One can also write $f_k \leq_{tt} A$ in place
of $f_k \leq_{wtt} A$ in these conditions.
\end{theorem}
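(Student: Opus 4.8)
The plan is to prove a cycle of implications, chaining through the six conditions in an order that keeps each individual implication short. A natural choice is $(6)\Rightarrow(4)\Rightarrow(3)\Rightarrow(2)\Rightarrow(1)\Rightarrow(5)\Rightarrow(6)$, though one might also route $(1)\Rightarrow(4)$ directly and close up. Several of these links are essentially already in hand: $(5)\Rightarrow(6)$ is precisely the forward direction of part 2 of Theorem~\ref{th:complex} (a $wtt$-DNR function forces $A$ complex), and $(6)\Rightarrow(5)$ is the reverse direction of the same theorem, so the crux is relating the intermediate combinatorial conditions $(1)$--$(4)$ to each other and to complexity.

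For $(6)\Rightarrow(4)$: if $A$ is complex via order function $g$, then taking $\gamma$ recursive with $g(\gamma(n))\ge n$ and $f_4(n)=A\restrict\gamma(n)$ gives $C(f_4(n))\ge g(\gamma(n))\ge n$, and since $\gamma$ is recursive and reading $A\restrict\gamma(n)$ queries only finitely many bits with a recursive bound, $f_4\le_{wtt}A$ (indeed $\le_{tt}A$). For $(4)\Rightarrow(3)$: if some partial-recursive $V$ with domain $\{0,1\}^*$ had $f_3(n)=f_4(n)\in\{V(p):|p|<n\}$ infinitely often, then composing $V$ with the universal machine shows $C(f_4(n))\le^+ n-1+O(1)<n$ for those $n$, contradicting $(4)$ — so $(3)$ holds with $f_3=f_4$. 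For $(3)\Rightarrow(2)$: given recursive $g$, enumerate from $h$ the sets $W_{h(n)}$ of size $<g(n)$ and feed their elements, with appropriate padding so that the $p$-th string enumerated overall for a given $n$ has length slightly less than $n$, into a partial-recursive $V$; then $f_3\notin\{V(p):|p|<n\}$ eventually forces $f_2(n)=f_3(n)\notin W_{h(n)}$ eventually, uniformly in the (fixed after choosing $g$) reduction, so $f_2\le_{wtt}A$. Here one has to be a little careful that the argument is uniform enough: for each fixed $g$ we get one $f_2$, which is all that $(2)$ asks.

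For $(2)\Rightarrow(1)$: apply $(2)$ with $g$ the constant function $1$ (so $|W_{h(n)}|<1$ means $W_{h(n)}=\emptyset$) — but that is too weak; instead take $g(n)=n+1$ and use the standard trick that for the diagonal function $\psi(y)=\varphi_y(y)$ one can, by the $s$-$m$-$n$ theorem, find a recursive $h$ with $W_{h(n)}=\{\psi(0),\dots,\psi(n)\}\cap(\text{something})$, more precisely $W_{h(n)}$ enumerating $\{\varphi_y(y):y\le n\}$, which has at most $n+1<g(n)$ elements; then $f_2(n)\notin W_{h(n)}$ for almost all $n$ says $f_2(n)\ne\varphi_y(y)$ for all $y\le n$, i.e. $f_2$ agrees with an SDNR function on a cofinite set, and a finite patch yields $f_1\le_{wtt}A$ that is genuinely SDNR. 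Finally $(1)\Rightarrow(5)$ is immediate since every SDNR function is DNR, and $(5)\Rightarrow(6)\Rightarrow\cdots$ closes the loop; the remark about $\le_{tt}$ follows because every reduction produced above is in fact a $tt$-reduction (the oracle use is recursively bounded and the reductions are total), and conversely $\le_{tt}\Rightarrow\le_{wtt}$ trivially.

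**The main obstacle** I anticipate is the bookkeeping in $(3)\Rightarrow(2)$ and $(2)\Rightarrow(1)$: one must arrange the enumerations and length-padding so that membership in $W_{h(n)}$ (or in the diagonal set up to $n$) is captured by strings $p$ of length $<n$ in the domain of a single partial-recursive $V$, while keeping everything uniform enough that $f_2$, resp.\ $f_1$, remains $wtt$-reducible to $A$ with the reduction not depending on the unknown $h$. The complexity-theoretic content is light; the work is in choosing the right coding so that Condition $(3)$, which quantifies over \emph{all} partial-recursive $V$, can be instantiated to kill \emph{every} admissible $h$ at once. The rest — the equivalences with Theorem~\ref{th:complex} and the passage between $\le_{wtt}$ and $\le_{tt}$ — should be routine given the results already established.
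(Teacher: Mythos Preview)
Your cycle runs opposite to the paper's, which proves $(1)\Rightarrow(2)\Rightarrow(3)\Rightarrow(4)\Rightarrow(5)\Rightarrow(6)\Rightarrow(1)$. The paper's orientation is easier because $(2)\Rightarrow(3)$ and $(3)\Rightarrow(4)$ each merely \emph{specialize} a universal hypothesis (instantiate $g(n)=2^n$, respectively $V=U$, and then shift by a constant), whereas your reverse implications $(4)\Rightarrow(3)$ and $(3)\Rightarrow(2)$ must \emph{establish} a universal conclusion. As written, both of those steps contain real gaps.

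In $(4)\Rightarrow(3)$ with $f_3=f_4$: if $f_4(n)=V(p)$ for some $|p|<n$, you only get $C(f_4(n))\le n-1+c_V$ for a constant $c_V$ depending on $V$; this does \emph{not} contradict $C(f_4(n))\ge n$, so no conclusion follows. The fix is to rescale, e.g.\ $f_3(n)=f_4(2n)$, so that $2n\le n-1+c_V$ fails for $n\ge c_V$. In $(3)\Rightarrow(2)$ with $f_2=f_3$: your encoding of $W_{h(n)}$ by strings of length ``slightly less than $n$'' can represent at most $2^n-1$ elements, but $|W_{h(n)}|$ is bounded only by the \emph{arbitrary} recursive $g(n)$, so the construction collapses once $g(n)\ge 2^n$. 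Here $f_2$ must genuinely depend on $g$: choose a strictly increasing recursive $m$ with at least $g(n)$ strings of length in $[m(n{-}1),m(n))$, set $f_2(n)=f_3(m(n))$, and for each $h$ build $V_h$ using that block of lengths to enumerate $W_{h(n)}$. (Minor off-by-one: in $(2)\Rightarrow(1)$ take $g(n)=n+2$, since $\{\varphi_y(y):y\le n\}$ can have $n+1$ elements.) The remaining links $(6)\Rightarrow(4)$, $(1)\Rightarrow(5)$, $(5)\Rightarrow(6)$ and the $tt$/$wtt$ remark are fine.
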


\begin{proof}
(1) implies (2): Let a recursive function $g$ be given,
without loss of generality $g$ is increasing. Furthermore
there is a recursive function $a$ such that
function $a$ such that $\varphi_{a(e,n,m)}(x)$ is the $m$-th element
enumerated into $W_{\varphi_e(n)}$ whenever $|W_{\varphi_e(n)}| \geq m$.
There is a recursive function $b$ such that $b(k) > a(e,n,m)$ for
all $e,m,n \leq g(k)$. Now let $f_2(n) = f_1(b(n))$.
It is follows from the definition of SDNR functions that
for all total function $h = \varphi_e$ and all $n>e$,
$f_1(b(n))$ differs from the first $g(n)$ elements of $W_{h(n)}$;
in the case that $|W_{h(n)}| < g(n)$ it follows that
$f_2(n) \notin W_{h(n)}$.
Furthermore, as $f_1 \leq_{wtt} A$ and
$f_2$ is many-one reducible to $f_1$, $f_2 \leq_{wtt} A$ as well.
Hence $f_2$ satisfies the requirements asked for.

(2) implies (3): Here one can choose $g$ such that
$g(n) = 2^n$ and consider the corresponding $f_2$. Then one takes
$f_3 = f_2$ and 
\[
W_{h(n)} = \{V(p): p \in \{0,1\}^* \wedge |p| < n\}.
\]
By assumption $f_2(n) \notin W_{h(n)}$ for almost all $n$,
the same holds then also for $f_3$.

(3) implies (4): Let $f_3$ be given as in (3) and let $U$ be
the universal machine on which $C$ is based. Then there is a constant
$c$ such that for all $n \geq  c$ and all $p$ in the domain of
$U$ with $|p| < n$, $f_3(n) \neq U(p)$. Thus $C(f_3(n)) \geq n$ for
all $n \geq c$. Now let $f_4(n) = f_3(n+c)$ in order to meet condition (4).

(4) implies (5): There is a constant $c$ with $C(\varphi_n(n)) < n+c$
for all $n$ such that $\varphi_n(n)$ converges. Taking $f_4$ as in (4), the function $f_5$ given
as $f_5(n) = f_4(n+c)$ satisfies that $C(\varphi_n(n)) < n+c \leq C(f_5(n))$
for all $n$ where $\varphi_n(n)$ is defined. Hence (5) is satisfied.

(5) implies (6): This follows from Theorem~\ref{th:complex}.

(6) implies (1): This is similar to the proof in Theorem~\ref{th:complex}.
Just choose again a constant $c$ such that $C(\varphi_n(n)) \leq n+c$
whenever defined. It follows from the definition of complex that
there is a recursive function $f_6$ with $C(A \restrict f_6(n)) > n+c$
for all $n$ and hence $f_1(n) = (A \restrict f_6(n))$ is an
SDNR function.

This completes the equivalences. As all steps from $(k)$ to $(k+1)$
for $k=1,2,3,4$ make a many-one reduction from one function to
another and as $f_1 \leq_{tt} A$, these functions are all tt-reducible
to $A$ along the lines of the proof. Hence all conditions hold with
$f_k \leq_{tt} A$ in place of $f_k \leq_{wtt} A$.
\end{proof}

\noindent
One can easily see that above characterizations hold with Turing
reducibility in place of weak truth-table reducibility for autocomplex
sets. Hence one obtains the following theorem as well.

\begin{theorem} \label{th:autocomplex}
For a set $A$ the following conditions are equivalent.
\begin{enumerate}
\item[\rm(1)]
There is a SDNR function $f_1 \leq_{T} A$;
\item[\rm(2)]
For every recursive function $g$ there is a function
$f_2 \leq_T A$ such that for all recursive $h$
with $\forall n\,[|W_{h(n)}| < g(n)]$ it holds that
$\forall^{\infty} n\,[f_2(n) \notin W_{h(n)}]$;
\item[\rm(3)]
There is $f_3 \leq_T A$ such that for all partial-recursive
$V$ with domain $\{0,1\}^*$ and almost all $n$,
$f_3(n) \notin \{V(p): p \in \{0,1\}^* \wedge |p|<n\}$;
\item[\rm(4)]
There is $f_4 \leq_{T} A$ with $C(f_4(n)) \geq n$ for all $n$;
\item[\rm(5)]
There is a DNR function $f_5 \leq_{T} A$;
\item[\rm(6)]
$A$ is autocomplex.
\end{enumerate}
\end{theorem}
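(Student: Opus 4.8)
The plan is to obtain Theorem~\ref{th:autocomplex} as the Turing-reducibility analogue of Theorem~\ref{th:complexchar}, essentially by rerunning that proof with $\leq_{wtt}$ replaced by $\leq_T$ everywhere and checking that nothing in the argument actually required the bound on the use of the oracle. The one genuinely new ingredient is the equivalence of~(6) with the other conditions via the autocomplex case of Theorem~\ref{th:complex}, so I will treat the cycle $(1)\Rightarrow(2)\Rightarrow(3)\Rightarrow(4)\Rightarrow(5)\Rightarrow(6)\Rightarrow(1)$ and indicate at each step why the relativised version goes through.

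First I would observe that the implications $(1)\Rightarrow(2)$, $(2)\Rightarrow(3)$, $(3)\Rightarrow(4)$, and $(4)\Rightarrow(5)$ in the proof of Theorem~\ref{th:complexchar} are pure combinatorics on the functions $f_k$: each step defines $f_{k+1}$ from $f_k$ by composing with a \emph{recursive} function (the maps $b$, the identity, and the shifts $n\mapsto n+c$) and uses only properties of $C$ and of the recursion-theoretic objects $W_{h(n)}$, $V$, $\varphi_n(n)$ that have nothing to do with how $f_1$ is computed from $A$. Hence if $f_1\leq_T A$ then each subsequent $f_k\leq_T A$, because $\leq_T$ is closed under composition with recursive functions, and the same inequalities and ``almost all~$n$'' statements hold verbatim. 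So these four implications require no new work beyond remarking that the reductions are preserved under $\leq_T$.

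Next, $(5)\Rightarrow(6)$ is exactly the autocomplex half of Theorem~\ref{th:complex}, part~1: if $A$ computes a DNR function then $A$ is autocomplex; that theorem is already proved and stated for Turing reducibility, so this step is immediate. Finally, for $(6)\Rightarrow(1)$ I would copy the argument from $(6)\Rightarrow(1)$ in Theorem~\ref{th:complexchar}, but now starting from the fact that $A$ is autocomplex: there is an $A$-order function $g$ with $C(A\restrict y)\ge g(y)$ for all~$y$, so taking the constant $c$ with $C(\varphi_n(n))\le n+c$ whenever defined and setting $f_6(n)=\min\{l\colon g(l)\ge n+c\}$ — an $A$-recursive function rather than a recursive one — we get $C(A\restrict f_6(n))\ge g(f_6(n))\ge n+c$, and then $f_1(n)=(A\restrict f_6(n))$ is an SDNR function with $f_1\leq_T A$. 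The only point to watch is that $f_6$ is merely $A$-recursive, which is harmless since we are aiming for $f_1\leq_T A$ and not $f_1\leq_{wtt}A$.

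I do not expect a serious obstacle: the whole content is bookkeeping to confirm that each step of the Theorem~\ref{th:complexchar} proof either is already a Turing-degree statement or uses only composition with recursive functions, together with the substitution of an $A$-order function for a recursive order in the last implication. The closest thing to a subtlety is making sure the ``$f_k\leq_{tt}A$'' strengthening in Theorem~\ref{th:complexchar} is \emph{not} claimed here — it genuinely fails for autocomplex sets, since by the remark before Theorem~\ref{th:complex} (citing \cite{AKLS}) there are Turing degrees with a DNR function but no recursively bounded, hence no tt-reducible, one. So the statement is correctly phrased with $\leq_T$ only, and the proof is a short ``by the same argument'' paragraph with the single modification noted above.
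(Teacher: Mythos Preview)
Your proposal is correct and is exactly the paper's approach: the paper does not give a separate proof of Theorem~\ref{th:autocomplex} but simply remarks that the proof of Theorem~\ref{th:complexchar} goes through verbatim with $\leq_T$ in place of $\leq_{wtt}$, the only change being that in $(6)\Rightarrow(1)$ the function~$f_6$ is $A$-recursive rather than recursive. Your write-up actually spells out more detail than the paper does; the one cosmetic point is that in $(6)\Rightarrow(1)$ you want a strict inequality $C(A\restrict f_6(n))>n+c$ (take $f_6(n)=\min\{l:g(l)>n+c\}$) to conclude $f_1(n)\neq\varphi_y(y)$ for all $y\le n$.
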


\begin{remark} \rm
In the two preceeding theorems, condition (2) said that one can
compute for each order a function avoiding all r.e.~traces of
given cardinality almost everywhere; the DNR Turing degrees
can also be characterized as those where there is for each
given r.e.\ trace a function avoiding this trace everywhere.
Condition (3) is the r.e.\ counterpart of Theorem~\ref{SNRisDNRorhigh}~(5)
below. Condition (4) says that one can compute a function which takes
on all inputs a value of sufficiently high Kolmogorov complexity.
Condition (6) is already known to be equivalent to the other ones,
but it was convenient to go the proof from (5) to (1) through (6)
in Theorem~\ref{th:complexchar}.
\end{remark}

\begin{remark} \rm
Instead of computing a lower bound $h$ for the complexity of $A$
from $A$ as an oracle, one can also formulate the same result
by making a function $h$ mapping strings to lower bounds; this
function then needs only to be correct on strings stemming from
the characteristic function of $A$. The characterization is the
following:

$A$ is complex iff there is a recursive function $h$ such that
\begin{itemize}
\item for all $\sigma,\tau \in \{0,1\}^*$, $h(\sigma\tau) \geq h(\sigma)$ and
\item for all $y$, $h(A\restrict y) \leq C(A\restrict y)$.
\item for every $n$ the set of strings
$\sigma \in \{0,1\}^*$ with $h(\sigma) \leq n$ is finite.
\end{itemize}
Similarly $A$ is autocomplex iff there is a function $h\le_T A$ with these same properties.
\end{remark}

\begin{remark}\label{Kanovich}\rm
M.I.\ Kanovich \cites{Kanovich:70, Kanovich:69} (see Li and 
Vitanyi \cite{LiVitanyi3}, Exercise 2.7.12, p.\ 184) states a result to the effect that the notions of being complex and autocomplex, defined in terms of monotonic complexity, are the same as being wtt-complete and T-complete in the special case of recursively enumerable sets.  
\end{remark}

\section{Hyperavoidable and effectively immune sets}

Miller \cite{Miller:02} introduced the notion of \emph{hyperavoidable
set}. A set $A$ is hyperavoidable iff it differs from all characteristic
functions of recursive sets within a length computable from a program
of that recursive set. For random sets, this length is at most the
length of the program plus a constant. So hyperavoidable sets are
a generalization of random sets and Theorem~\ref{bjornsidt} shows that
one can characterize that $A$ is hyperavoidable similarly
to the way one characterizes that $A$ is random in terms of
prefix-free Kolmogorov complexity: $A$ is hyperavoidable iff
there is an order function $g$ such that $\forall
x\,(C(A\restrict x) \geq g(x))$. 

\begin{definition}
A set of nonnegative integers $A$ is called hyperavoidable if there is
an order function $h$ such that for all $x$
with
\[
	\{0,1,\ldots,h(x)-1\}\subseteq\{y:\varphi_x(y)\da\in\{0,1\}\},
\]
we have $A\restrict h(x)\neq \varphi_x \restrict h(x)$. In other words,
\[
\forall x\,\exists y<h(x)\,\,A(y)\ne\varphi_x(y).
\]
\end{definition}

\noindent
Note that if $A$ is hyperavoidable via $h$ and $\tilde h$ is a further
order function with
$\tilde h(x) \geq h(x)$ for all $x$ then $A$ is also hyperavoidable
via $\tilde h$.

The original reason for interest in hyperavoidability is the following result.

\begin{theorem}{\rm\cite{Miller:02}*{Theorem 4.6.4}}
A set is hyperavoidable iff it is not wtt-reducible to any
hyperimmune set.
\end{theorem}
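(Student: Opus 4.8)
The equivalence splits into two implications, handled quite differently.

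\emph{Hyperavoidable implies not $\leq_{wtt}$ any hyperimmune set.} I would prove the contrapositive: if $A=\Phi^{B}$ for an oracle machine $\Phi$ with recursive use bound $u$ and $B$ is hyperimmune, then no order function witnesses hyperavoidability of $A$. For $\sigma\in\{0,1\}^{s}$ let $\psi_{\sigma}$ be the partial recursive function that on input $n$ simulates $\Phi(n)$, answering queries below $s$ by $\sigma$, queries in $[s,u(n))$ by $0$, and aborting with output $0$ if a query $\ge u(n)$ occurs or $\Phi(n)$ returns a value outside $\{0,1\}$; let $\mathrm{idx}(\sigma)$ be an index of $\psi_{\sigma}$ computed from $\sigma$. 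The point is: if $\sigma=B\restrict s$ and $B$ has no element in $[s,\max_{n<\ell}u(n))$, then $\psi_{\sigma}$ is defined and $\{0,1\}$-valued on $[0,\ell)$ and agrees there with $A$. Given a candidate order $h$, set $t(s)=\max\{\max_{n<h(\mathrm{idx}(\sigma))}u(n):\sigma\in\{0,1\}^{s}\}$, a recursive function, and let $s_{0}=0$ and $s_{k+1}=\max(s_{k}+1,t(s_{k}))$. Since $B$ is hyperimmune its principal function is not recursively dominated, so $B$ must miss one of the disjoint intervals $[s_{k},s_{k+1})$; for such a $k$, the index $x=\mathrm{idx}(B\restrict s_{k})$ satisfies $\max_{n<h(x)}u(n)\le s_{k+1}$, whence $\varphi_{x}$ is total and $\{0,1\}$-valued on $[0,h(x))$ with $\varphi_{x}\restrict h(x)=A\restrict h(x)$; thus $h$ fails. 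As $h$ was arbitrary, $A$ is not hyperavoidable.

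\emph{Not hyperavoidable implies $\leq_{wtt}$ some hyperimmune set.} Here I would construct a hyperimmune $B$ with $A\leq_{wtt}B$. If $A$ is recursive, any hyperimmune set works; if $A$ is itself hyperimmune, take $B=A$; otherwise a construction is needed, driven by the following consequence of non-hyperavoidability for the function $w(\ell)=$ the least $x$ with $\varphi_{x}$ total and $\{0,1\}$-valued on $[0,\ell)$ and $\varphi_{x}\restrict\ell=A\restrict\ell$: \emph{for every recursive increasing $\rho$ there are infinitely many $\ell$ with $w(\ell)\le\rho(\ell)$}. This follows by applying the definition of hyperavoidability to the order functions $x\mapsto\max(\min\{m:\rho(m)\ge x\},M)$ for $M=0,1,2,\dots$: a witness $x$ for such an order yields, with $\ell^{*}$ the exact agreement length of $\varphi_{x}$ with $A$ (finite since $A$ is not recursive), the inequalities $w(\ell^{*})\le x\le\rho(\ell^{*})$ and $\ell^{*}\ge M$. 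Using that $w$ is nondecreasing and undominated-from-below by recursive functions along an infinite set, I would encode $A$ into $B$ via consecutive blocks of recursively prescribed, rapidly growing lengths, in which the positions of the elements of $B$ record capped agreement lengths $\min(\ell^{*}(k),|\mathrm{block}_{k}|)$, so as to let a prefix of $B$ decode an ever-lengthening prefix of $A$ with recursive use; hyperimmunity of $B$ should come from these recorded lengths, hence the elements of $B$, being not recursively bounded, and immunity of $B$ from the fact that recognising the occupied blocks needs the non-r.e.\ information certifying the agreements $\varphi_{k}\restrict m=A\restrict m$.

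The second implication is the hard one, and the crux is a genuine tension: the wtt use bound forces all information in $B$ pertinent to $A\restrict n$ below a recursive bound, while hyperimmunity forces the elements of $B$ to non-recursively large positions. Reconciling them requires that only the \emph{pattern} of occupied blocks and the capped agreement lengths, nothing recursive, carry the non-recursive content of $A$, and that the block-length schedule be tuned against the ``$w$ escapes every recursive rate'' property so that \emph{every} initial segment of $A$, not merely cofinally many, is decoded with recursive use while $B$ stays sparse and non-r.e. Getting this bookkeeping to close is where I expect the real difficulty to lie.
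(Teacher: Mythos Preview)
The paper does not actually prove this theorem: it is quoted from Miller's thesis \cite{Miller:02} and stated without proof, serving only as motivation for the subsequent equivalence of hyperavoidable and complex. So there is no ``paper's own proof'' to compare your attempt against.

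On the merits of your plan: the first implication (hyperavoidable $\Rightarrow$ not $\leq_{wtt}$ hyperimmune) is correct as written. One small point deserves a sentence: your $\psi_\sigma$ could in principle diverge if $\Phi$ fails to halt on the simulated oracle, but in the crucial case $\sigma=B\restrict s_k$ with $B\cap[s_k,s_{k+1})=\emptyset$ the simulated oracle agrees with $B$ on all queried bits (since $u(n)\le s_{k+1}$ for $n<h(x)$), so $\psi_\sigma(n)=\Phi^B(n)=A(n)$ converges. The interval argument is the standard equivalent form of hyperimmunity.

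The second implication is, as you yourself flag, only a plan, and the gap is real. Your lemma that $w(\ell)\le\rho(\ell)$ infinitely often for every recursive $\rho$ is correct and useful, but the coding sketch does not yet explain how \emph{every} bit of $A$ gets decoded from $B$ with recursive use. If block $k$ records $\min(\ell^*(k),|\text{block}_k|)$ for some agreement length $\ell^*(k)$, you have not said which partial recursive function the decoder is supposed to run, nor why the recorded lengths are monotone enough that a fixed recursive schedule of block lengths suffices; for many $k$ the relevant $\varphi$ may be partial or agree with $A$ only trivially, and your lemma only gives good $k$'s cofinally. You also need $B$ infinite (one element per block is fine) and immune, and the immunity argument (``recognising occupied blocks needs non-r.e.\ information'') is not yet an argument. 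None of this is fatal to the approach, but substantial work remains before it closes; Miller's original proof is the place to look for the missing mechanism.
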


\begin{theorem}\label{bjornsidt}
For a set $A$ the following statements are equivalent:
\begin{itemize}
\item[\rm(1)] $A$ is complex.
\item[\rm(2)] $A$ is hyperavoidable.
\end{itemize}
\end{theorem}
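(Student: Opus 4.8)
The plan is to prove $(1)\Rightarrow(2)$ directly from the definition of complexity by a description‑length argument, and $(2)\Rightarrow(1)$ by producing a DNR function that is wtt‑reducible to $A$ and then quoting Theorem~\ref{th:complex}(2).

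For $(2)\Rightarrow(1)$ I would fix a hyperavoidable set $A$, witnessed by the order function $h$, and build a DNR function $d\le_{wtt}A$. Fix once and for all an injective recursive coding of binary strings by numbers, and, by the $s$-$m$-$n$ theorem, a recursive $s$ such that $\varphi_{s(x)}$ is the partial function which, on input $m$, runs $\varphi_x(x)$ and, if this halts with a value coding a string $\tau$, outputs $\tau(m)$ for $m<|\tau|$ and diverges for $m\ge|\tau|$ (and diverges everywhere if $\varphi_x(x)$ diverges or its value is not a string code). Put $d(x)=\mathrm{code}(A\restrict h(s(x)))$. Since $x\mapsto h(s(x))$ is recursive, it serves as a use bound, so $d$ is total and $d\le_{wtt}A$. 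If $\varphi_x(x)\da=d(x)$, then $\varphi_x(x)$ codes the string $\tau=A\restrict h(s(x))$ (nonempty, since any hyperavoidability witness is everywhere positive), whence $\varphi_{s(x)}$ is $\{0,1\}$‑valued and total on $\{0,\dots,h(s(x))-1\}$ with $\varphi_{s(x)}\restrict h(s(x))=A\restrict h(s(x))$, contradicting hyperavoidability of $A$ via $h$ at the index $s(x)$. Hence $d$ is DNR, and by Theorem~\ref{th:complex}(2) the set $A$ is complex.

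For $(1)\Rightarrow(2)$ I would start from a recursive order $g$ with $C(A\restrict y)\ge g(y)$ for all $y$. For a parameter $c$ let $h_c(x)=\min\{\ell: g(\ell)>\log(x+2)+c\}$; since $g$ is recursive, nondecreasing and unbounded, for all sufficiently large $c$ the function $h_c$ is a genuine order function that is everywhere $\ge 1$. Fix the single machine $N$ that, given an input parsed as a self‑delimiting code of a number $c$ followed by the binary numeral of a number $x$, computes $\ell=h_c(x)$ (with $g$ hard‑wired), simulates $\varphi_x$ on $0,\dots,\ell-1$, and outputs $\varphi_x\restrict\ell$ provided all these values exist and lie in $\{0,1\}$; let $c_N$ be its invariance constant. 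Suppose $A$ were not hyperavoidable via $h_c$: choose $x$ with $\varphi_x$ $\{0,1\}$‑valued and total on $\{0,\dots,h_c(x)-1\}$ and $\varphi_x\restrict h_c(x)=A\restrict h_c(x)$. Feeding $N$ the code of $c$ followed by the numeral of $x$ produces $A\restrict h_c(x)$, so $C(A\restrict h_c(x))\le 2\log(c+2)+\log(x+2)+O(1)+c_N$; on the other hand $g(h_c(x))>\log(x+2)+c$ by construction and $g(h_c(x))\le C(A\restrict h_c(x))$ by complexity. Comparing these, $c< 2\log(c+2)+O(1)+c_N$, which fails once $c$ is large enough, the $O(1)$ and $c_N$ being absolute constants. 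So for such a $c$, the set $A$ is hyperavoidable via $h_c$.

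The one delicate point, and the step I expect to be the real obstacle, is exactly this apparent circularity in $(1)\Rightarrow(2)$: the witness $h$ must outgrow the cost of naming the index $x$ together with a machine‑dependent constant, yet that constant seems to depend on $h$. The device that breaks the loop is to hand the growth parameter $c$ to the single fixed machine $N$ as part of its input instead of hard‑wiring it, so that $c_N$ does not depend on $c$, and then to choose $c$ large, using that describing $c$ costs only $O(\log c)$ bits. (One could instead obtain $h$ by an appeal to the Recursion Theorem, which would be fitting given the theme of the paper.) Apart from this, the proof is routine $s$-$m$-$n$ and invariance bookkeeping, together with checking that $d$ is total and wtt‑reducible to $A$ and that $h_c$ is an order function.
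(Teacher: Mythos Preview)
Your proof is correct, but the two directions are handled in the mirror-image way to the paper's. The paper invokes the DNR characterization (Theorem~\ref{th:complex}) for $(1)\Rightarrow(2)$: from a wtt-computable DNR function $\varphi_r^A$ with recursive use bound $u$, it builds via $s$-$m$-$n$ an index $s(n)$ simulating $\varphi_r$ with oracle $W_n$, and takes $h(n)=u(s(n))+1$ as the hyperavoidability witness. For $(2)\Rightarrow(1)$ the paper argues directly by contraposition: given any order $h$, it defines an auxiliary order $\tilde h$ from $h$ and the map $\sigma\mapsto f(\sigma)$ (an index for the characteristic function of $U(\sigma)$), uses non-complexity to find $y$ with $C(A\restrict y)<\tilde h(y)$, and extracts from the short program an index defeating $h$. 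You do the reverse: your $(2)\Rightarrow(1)$ goes through Theorem~\ref{th:complex}(2) by exhibiting the DNR function $d(x)=A\restrict h(s(x))$, while your $(1)\Rightarrow(2)$ is a direct description-length argument producing the explicit witness $h_c(x)=\min\{\ell:g(\ell)>\log(x+2)+c\}$. Your parameter trick---passing $c$ to a single fixed machine so the invariance constant does not depend on $c$---is exactly the device needed to break the circularity you identified, and gives a somewhat more explicit form for the hyperavoidability witness than the paper's route through DNR; on the other hand, the paper's contrapositive argument for $(2)\Rightarrow(1)$ is self-contained and does not appeal to Theorem~\ref{th:complex}. Both packages are of comparable length and difficulty.
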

\begin{proof} 
(1) implies (2):
If $A$ is complex then as we have seen, $\varphi_r^A$ is a DNR
function for a wtt-reduction
$\varphi_r$. Assume that that $u(x)$ is the use of the wtt-reduction
at input $x$,
that is, the maximal element queried at the computation of $\varphi^A_r(x)$;
note that this element is independent of the oracle $A$.
Furthermore, for any $n$ let $W_n = \{z : \varphi_n(z) \da = 1\}$.
Now let $\varphi_{s(n)}(x) = \varphi_r^{W_n}(x)$ iff
$\varphi_n(z) \da \in \{0,1\}$ for all $z \leq u(x)$ and
the computation $\varphi_r^{W_n}(x)$ terminates;
let $\varphi_{s(n)}(x)$ be undefined otherwise. Define
$h(n) = u(s(n))+1$. Since $\varphi_r^A$ is a DNR function there is no
$n$ such that $\varphi_{s(n)}(s(n)) \da = \varphi_r^A(s(n))$. Thus
for every $n$ there is a $y \leq u(s(n))$ such that $\varphi_n(y) \uparrow$
or $\varphi_n(y) \neq A(y)$. It follows that for every $n$ the
function $\varphi_n$ differs from the characteristic function
of $A$ before $h(n)$.

(2) implies (1): Suppose $A$ is not complex. Let $U$ be the universal function
on which $C$ is based.
Let $f$ be a total recursive function such that, for all $\sigma \in \{0,1\}^*$
where $U(\sigma)$ is defined, $\varphi_{f(\sigma)}$ is the
characteristic function of the set $\{x: U(\sigma)(x) \da = 1\}$.
Furthermore, let $\tilde f(n) = \max\{f(\sigma) : \sigma \in \{0,1\}^*
 \wedge |\sigma| \leq n\}$.

Let $h$ be any order function. Let
$\tilde h(n)$ be the maximal $m$ with $m=0 \vee h(\tilde f(m)) \leq n$.
Then $\tilde h$ is also an order function.
Thus there is a $y$ such that $C(A \restrict y) < \tilde h(y)$.
Then there is a program $\sigma$ for the universal machine $U$
with $U(\sigma) = A \restrict y$ and $|\sigma| < \tilde h(y)$.
It follows that $h(f(\sigma)) < y$. Thus the characteristic functions of
$A$ and $\{x: U(\sigma)(x) \da = 1\}$ both coincide with
$\varphi_{f(\sigma)}$ on the first $y$ inputs and so $A$
is not hyperavoidable via $h$. Since the choice of $h$ was
arbitrary, $A$ is not hyperavoidable.
\end{proof}

\noindent
Miller \cite{Miller:02} investigated the relation between
hyperavoidable sets and effectively immune sets. A set $A$ is \emph{immune}
if it has no infinite recursive subset and
\emph{effectively immune} if there is a partial recursive function $\psi$
such that for all $e$, if $W_e \subseteq A$ then $e$ is in the domain
of $\psi$ and $|W_e| \leq \psi(e)$.
If there is no recursive function $f$ with
$\forall n \,(|A \cap \{0,1,2,\ldots,f(n)\}| \geq n)$
then $A$ is called \emph{hyperimmune}.

\begin{theorem}{\rm\cite{Miller:02}*{Theorem 4.5.3}}\label{eff}
Any effectively immune, nonhyperimmune set is hyperavoidable.
\end{theorem}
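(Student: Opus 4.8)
The plan is to establish the contrapositive-free statement directly: assuming $A$ is effectively immune and not hyperimmune, we build an order function $h$ witnessing that $A$ is hyperavoidable. Fix a partial recursive $\psi$ witnessing effective immunity, so that $W_e\subseteq A$ implies $e\in\operatorname{dom}(\psi)$ and $|W_e|\le\psi(e)$, and fix a recursive $f$ witnessing non-hyperimmunity, so that $|A\cap\{0,\ldots,f(n)\}|\ge n$ for all $n$; by replacing $f(n)$ with $\max\{f(0),\ldots,f(n),n\}$ we may assume $f$ is an order function.

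The core of the argument is a self-referential construction that forces $\psi$ to be defined exactly where we need it. Using the $s$-$m$-$n$ Theorem together with the Recursion Theorem, I would fix a recursive function $d$ so that for every $x$ the set $W_{d(x)}$ is enumerated as follows: first run the computation $\psi(d(x))$, and if and when it halts with value $v$, enumerate into $W_{d(x)}$ every $y\le f(v+1)$ with $\varphi_x(y)\da=1$. The key point is that $\psi(d(x))$ converges for \emph{every} $x$: if not, then $W_{d(x)}=\emptyset\subseteq A$, and effective immunity forces $d(x)\in\operatorname{dom}(\psi)$, a contradiction. Hence $n(x):=\psi(d(x))+1$ is a total recursive function of $x$, and I would put $h(x):=\max\bigl(\{x\}\cup\{f(n(x'))+1:x'\le x\}\bigr)$, which is manifestly a recursive order function and satisfies $h(x)\ge f(\psi(d(x))+1)+1$.

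Finally I would verify that $h$ witnesses hyperavoidability. Fix $x$, suppose $\varphi_x(y)\da\in\{0,1\}$ for all $y<h(x)$, and suppose toward a contradiction that $A\restrict h(x)=\varphi_x\restrict h(x)$. Writing $v=\psi(d(x))$, we have $f(v+1)<h(x)$, so on $\{0,\ldots,f(v+1)\}$ the function $\varphi_x$ is total, $\{0,1\}$-valued, and equal to $A$; therefore $W_{d(x)}=\{y\le f(v+1):\varphi_x(y)\da=1\}=A\cap\{0,\ldots,f(v+1)\}$, which is a subset of $A$. Effective immunity now gives $|W_{d(x)}|\le\psi(d(x))=v$, while non-hyperimmunity gives $|W_{d(x)}|=|A\cap\{0,\ldots,f(v+1)\}|\ge v+1$, a contradiction. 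Hence $A\restrict h(x)\neq\varphi_x\restrict h(x)$, and $A$ is hyperavoidable.

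The single delicate point is the one flagged above: since the paper's definition of effective immunity supplies only a partial $\psi$, one cannot simply search for an index $e$ with a small value $\psi(e)$, because such a search need not terminate. The Recursion-Theorem construction circumvents this by keeping $W_{d(x)}$ empty until $\psi(d(x))$ halts, so that immunity \emph{itself} forces convergence; once that is in place the rest is routine bookkeeping. (One could instead prove $A$ is complex and invoke Theorem~\ref{bjornsidt}, but the direct construction of $h$ appears to be the shortest route.)
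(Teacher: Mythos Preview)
The paper does not include a proof of this theorem; it is quoted from Miller's thesis and stated without argument, so there is no in-paper proof to compare your proposal against.

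That said, your argument is correct. The parametrized Recursion Theorem does yield a recursive $d$ with $W_{d(x)}$ defined in terms of $\psi(d(x))$ as you describe, and the key step---that effective immunity forces $\psi(d(x))\da$ for every $x$, since otherwise $W_{d(x)}=\emptyset\subseteq A$---is exactly what is needed to make $n(x)$ and hence $h$ total recursive despite $\psi$ being only partial. The verification is also sound: under the assumption that $\varphi_x$ is $\{0,1\}$-valued on $[0,h(x))$ and agrees with $A$ there, you correctly identify $W_{d(x)}$ with $A\cap\{0,\ldots,f(v+1)\}$ and obtain the size contradiction $v\ge |W_{d(x)}|\ge v+1$. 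The alternative route you mention at the end (show $A$ is complex and invoke Theorem~\ref{bjornsidt}) would also work and is perhaps closer in spirit to how the surrounding section is organised, but your direct construction of $h$ is self-contained and arguably cleaner.
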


\noindent
The converse is false; if $A$ is a complex set then $A\oplus\omega$
is still complex, hence hyperavoidable, but not immune, hence certainly
not effectively immune. 
However, up to truth-table degree, we shall see that the converse does hold.

\begin{theorem}
If $A$ is hyperavoidable then there is a set $B \equiv_{tt} A$ which
is effectively immune but not hyperimmune. This set can be viewed as a set
of strings and is given as $B = \{A(0)A(1)\ldots A(n): n \in \omega\}$.
\end{theorem}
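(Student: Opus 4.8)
The plan is to check directly the three claimed properties of the concrete set $B=\{A(0)A(1)\cdots A(n):n\in\omega\}$, where we identify $\{0,1\}^{*}$ with $\omega$ via the canonical length-lexicographic bijection so that $B$ becomes a set of natural numbers; note that $B$ contains exactly one string of each positive length, since $A\restrict k$ has length $k$.

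For $B\equiv_{tt}A$ both directions are bounded truth-table reductions. To decide whether a string $\sigma$ of length $\ell$ lies in $B$ one queries $A(0),\dots,A(\ell-1)$ and checks agreement with $\sigma$. To compute $A(n)$ one queries the oracle about all $2^{n+1}$ strings of length $n+1$ and, provided exactly one of them is in the oracle, outputs its bit in position $n$ (and a default value otherwise, so the functional is total); when the oracle is $B$ this unique string is $A\restrict(n+1)$. For the failure of hyperimmunity the argument is purely combinatorial and uses nothing about $A$: all strings of length at most $k$ occupy an initial segment of $\omega$ of recursively bounded size, so $f(n)=2^{n+1}$ is a recursive function with $|B\cap\{0,1,\dots,f(n)\}|\ge n$ for every $n$, which is exactly the negation of hyperimmunity.

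The substantive part is effective immunity, and this is where hyperavoidability of $A$ is used; the equivalence with complexity from Theorem~\ref{bjornsidt} is not needed. Fix an order function $h$ witnessing that $A$ is hyperavoidable. By the s-m-n theorem there is a recursive function $x(\cdot)$ such that $\varphi_{x(e)}(y)$ enumerates $W_e$ and halts with output $\sigma(y)$ at the first string $\sigma\in W_e$ with $|\sigma|>y$, diverging if no such $\sigma$ ever appears. The governing observation is that if $W_e\subseteq B$ then every string in $W_e$ is an initial segment of $A$, so these strings are pairwise consistent and whenever $\varphi_{x(e)}(y)$ halts its value equals $A(y)$. Applying hyperavoidability via $h$ (in its ``in other words'' form) to the index $x(e)$ yields some $y_0<h(x(e))$ with $A(y_0)\ne\varphi_{x(e)}(y_0)$, where divergence of $\varphi_{x(e)}(y_0)$ is counted as inequality; the governing observation then forces $W_e$ to contain no string of length greater than $y_0$. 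Since $B$, and hence $W_e$, has at most one string of each length, $|W_e|\le y_0\le h(x(e))$, so the total recursive function $\psi(e)=h(x(e))$ bounds the size of every r.e.\ subset of $B$. In particular $B$ is infinite and has no infinite r.e.\ subset, hence it is immune, and $\psi$ witnesses that it is effectively immune.

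I expect the only delicate point to be the interplay between the two formulations of hyperavoidability: one must make sure the difference point $y_0<h(x(e))$ is available even when $\varphi_{x(e)}$ diverges somewhere below $h(x(e))$, and recognise that this divergence case is precisely what yields the length bound on $W_e$ and thus the uniform bound on $|W_e|$. Everything else — the s-m-n construction, the consistency of the prefixes of $A$, and the counting of strings by length — is routine.
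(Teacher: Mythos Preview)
Your proof is correct, and it takes a genuinely different route from the paper's. The paper first invokes Theorem~\ref{bjornsidt} to pass from hyperavoidability to complexity, obtains a recursive~$\psi$ with $C(A\restrict m)\ge 2e$ for all $m\ge\psi(e)$, and then argues that if $W_e\subseteq B$ and $|W_e|>\psi(e)$ one could effectively pick a long $\theta(e)\in W_e$ and get the contradiction $2e\le C(\theta(e))\le e+c$. You instead work straight from the definition of hyperavoidability: your s-m-n function $\varphi_{x(e)}$ recovers bits of~$A$ from any $W_e\subseteq B$ containing a long enough string, and the hyperavoidability witness~$h$ then directly caps the lengths occurring in~$W_e$, hence its size. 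Your approach is more elementary in that it avoids both Kolmogorov complexity and Theorem~\ref{bjornsidt}; the paper's approach, on the other hand, dovetails with its running theme and immediately yields the observation (noted after the proof) that $B$ is even \emph{strongly} effectively immune---though your total recursive bound $\psi(e)=h(x(e))$ gives that too. One tiny slip: you write $|W_e|\le y_0\le h(x(e))$, but in fact $y_0<h(x(e))$; the conclusion $|W_e|\le h(x(e))$ is of course unaffected.
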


\begin{proof}
Obviously $B \equiv_{tt} A$. Furthermore,
$B$ is not hyperimmune as $B$ contains a binary string of every
length. Furthermore, it is effectively immune: As $A$ is complex there
is a recursive function $\psi$ such that the complexity of all initial
segments of $A$ which are longer than $\psi(n)$ is above $2n$. Hence,
whenever $|W_e| > \psi(e)$ and $W_e \subseteq B$, one can find
effectively in $e$ a string $\theta(e) \in W_e$ which has at least
the length $\psi(e)$. Now $C(\theta(e)) \leq e+c$ for some constant
$c$ and all $e$ in the domain of $\theta$. On the other hand
$C(\theta(e)) \geq 2e$ for all $e$ in the domain of $\theta$
with $W_e \subseteq B$; so it can only happen for $e \leq c$
that $W_e \subseteq B \wedge |W_e| \geq \psi(e)$. A finite
modification of $\psi$ makes $\psi$ to be a witness for
$B$ being effectively immune.
\end{proof}

\noindent
Note that the proof actually shows that $B$ is strongly effectively
immune. Hence one gets the following corollary.

\begin{cor}
A set is complex iff its truth-table degree contains a set which is
strongly effectively immune but not hyperimmune.
\end{cor}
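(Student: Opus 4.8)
The plan is to obtain both implications by combining results already proved, the only genuinely new ingredient being the observation that the property of being complex is invariant under truth-table equivalence — indeed under weak truth-table equivalence. To see this, recall from Theorem~\ref{th:complexchar} (together with Theorem~\ref{th:complex}) that $A$ is complex if and only if some DNR function is weak truth-table reducible to $A$; since $\leq_{wtt}$ is transitive, this property is inherited by any set that is $wtt$-equivalent, a fortiori $tt$-equivalent, to $A$. Thus for any $B \equiv_{tt} A$ we have: $A$ is complex iff $B$ is complex.

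For the forward direction, suppose $A$ is complex. By Theorem~\ref{bjornsidt}, $A$ is hyperavoidable, so the theorem immediately preceding this corollary produces a set $B \equiv_{tt} A$ which is not hyperimmune and, as noted right after that theorem's proof, strongly effectively immune. Since $B \equiv_{tt} A$, this $B$ lies in the truth-table degree of $A$, which is exactly what is required.

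For the converse, suppose the truth-table degree of $A$ contains a set $B$ that is strongly effectively immune but not hyperimmune. Every strongly effectively immune set is in particular effectively immune, so by Theorem~\ref{eff} the set $B$ is hyperavoidable, and hence by Theorem~\ref{bjornsidt} it is complex. As $A \equiv_{tt} B$, the invariance noted above yields that $A$ is complex as well.

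I do not anticipate a real obstacle: the argument is a straightforward assembly of Theorems~\ref{bjornsidt} and~\ref{eff}, the preceding theorem (in its sharpened form giving strong effective immunity rather than merely effective immunity), and the $tt$-invariance of complexity. The single point that needs more than a citation — rather than an actual difficulty — is that $tt$-invariance, and for this the DNR-function characterization of complex sets supplied by Theorem~\ref{th:complexchar} is precisely the right tool.
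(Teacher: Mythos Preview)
Your proposal is correct and matches the paper's intent. The paper states this corollary without proof, merely noting that it follows from the preceding theorem (in its strengthened form giving strong effective immunity) together with Theorems~\ref{bjornsidt} and~\ref{eff}; your argument spells out exactly these details, including the one ingredient for the converse direction---the $tt$-invariance of complexity via its DNR characterization---that the paper leaves implicit.
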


\begin{remark} \rm 
It is well-known that a Turing degree contains a DNR function iff
it contains an effectively immune set. In other words, a set is
autocomplex iff its Turing degree contains an effectively immune set.
\end{remark}

\section{Completions of Peano arithmetic}

\noindent
Theorem~\ref{th:complexchar}~(4) shows that DNR is equivalent to
the ability to compute a function $F$ such that
$C(F(n)) \geq n$ for all $n$. The next result shows that if
one enforces the additional constraint $F(n) \in \{0,1\}^n$ then
one obtains the smaller class of PA-complete degrees
instead of the DNR ones. Recall that $A$ has PA-complete
degree iff $A$ computes a DNR function with a finite range.
As Jockusch~\cite{Jockusch:89} showed, one can specify this
range to be any given finite set as long as this set has
at least $2$ elements.

\begin{theorem} \label{pacharacterization}
The following are equivalent for every set $A$.
\begin{enumerate}
\item[\rm (1)]
$A$ computes a lower bound $B$ of the plain complexity $C$ such
that for all $n$ there are at most $2^n-1$ many $x$ with $B(x)<n$.
\item[\rm (2)]
$A$ computes a function $F$ such that for all $n$, $F(n)$ has length $n$
and satisfies $C(F(n)) \geq n$.
\item[\rm (3)]
$A$ computes a DNR function $D$ which has a fixed finite set as range.
\end{enumerate}
\end{theorem}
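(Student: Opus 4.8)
The plan is to establish the cycle $(3)\Rightarrow(2)\Rightarrow(1)\Rightarrow(3)$, since the step $(2)\Rightarrow(1)$ is essentially a counting observation and $(1)\Rightarrow(3)$ repackages a lower bound on $C$ into a $\{0,1\}$-valued DNR function in the style of the earlier theorems. First I would treat $(3)\Rightarrow(2)$: given a DNR function $D\le_T A$ with finite range, by Jockusch's remark (cited just before the statement) we may assume $D$ takes values in $\{0,1\}$, so $D$ is a $\{0,1\}$-valued DNR function, i.e. $A$ is PA-complete. The idea is that a PA-complete oracle can, for each $n$, search through $\{0,1\}^n$ and ``decide'' which strings are outputs of short $U$-programs: for a fixed self-referential construction one builds, uniformly in $n$, an r.e.\ (in fact co-r.e.\ approximable) set of ``forbidden'' strings of length $n$ — those $\sigma$ with $C(\sigma)<n$ — and since there are at most $2^n-1$ such strings, at least one string of length $n$ is not forbidden. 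A $\{0,1\}$-valued DNR function can make a consistent choice of a non-forbidden string by the usual ``PA degrees compute paths through $\Pi^0_1$ classes'' argument (the $\Pi^0_1$ class here is the set of functions $n\mapsto\sigma_n\in\{0,1\}^n$ avoiding all length-$n$ strings enumerated as having a short program). This yields $F\le_T A$ with $|F(n)|=n$ and $C(F(n))\ge n$.

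Next, $(2)\Rightarrow(1)$: given such an $F\le_T A$, define $B(x)$ by interpreting $x$ as a string $\sigma_x$ (under a fixed bijection $\omega\leftrightarrow\{0,1\}^*$) and setting $B(x)=|\sigma_x|$ if $\sigma_x=F(|\sigma_x|)$, and $B(x)=0$ otherwise. Then $B\le_T A$, and $B$ is a lower bound for $C$ up to the constant from the bijection — more carefully, I would incorporate the at-most-$2\log$ overhead and a finite shift so that genuinely $B(x)\le C(\sigma_x)$; the point is that $B(x)$ is positive only on the single string $F(n)$ of each length $n$, and on that string $C(F(n))\ge n=B(x)$. For the counting clause: $B(x)<n$ fails only when $\sigma_x=F(m)$ for some $m\ge n$, and for each such $m$ there is exactly one such $x$; but we need the complementary count, namely $\#\{x:B(x)<n\}$, which is all $x$ except those with $\sigma_x=F(m)$, $m\ge n$ — that is cofinitely many, so the bound ``at most $2^n-1$'' cannot literally hold with this naive $B$. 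So the real construction of $B$ must be: set $B$ to agree with the trivial lower bound $x\mapsto 0$ except boost $B(\sigma_x)$ to $|\sigma_x|$ only on the strings $F(n)$, and additionally one must ensure that among strings of length $<n$ we don't overcount; I would instead let $B(\sigma)=\min(|\sigma|, (\text{number encoding }\sigma))$-style truncation isn't right either. The clean route: define $B(\sigma)=|\sigma|$ if $\sigma=F(|\sigma|)$ and $B(\sigma)=0$ otherwise is a lower bound only after checking that the $F(n)$ are pairwise distinct (they are, having distinct lengths), and then $\{\sigma:B(\sigma)<n\}=\{\sigma:|\sigma|<n\}\cup\{\sigma:|\sigma|\ge n,\ \sigma\ne F(|\sigma|)\}$ — still infinite. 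The fix used in such arguments is to restrict attention: we only need $B$ to be a lower bound for $C$ and we only need the counting clause — and the counting clause as stated bounds $\#\{x:B(x)<n\}\le 2^n-1$, which forces $B$ to be large (namely $\ge n$) on cofinitely many $x$. So $B$ must assign $B(x)\ge n$ to all but $\le 2^n-1$ many $x$; take $B(\sigma)=|\sigma|$ for $\sigma\notin\{F(|\sigma|)\}$?? That contradicts being a lower bound for $C$ since most strings have low $C$. Hence the correct reading: $x$ ranges over $\omega$ and $B$ is a lower bound on $C$ of the $x$-th string; the $2^n-1$ strings with $B(x)<n$ should be exactly (a superset of) the $\le 2^n-1$ strings of length $<n$. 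So define $B(\sigma)=|\sigma|$ when $\sigma=F(|\sigma|)$, else $B(\sigma)=$ the length of the longest prefix $\tau\preceq\sigma$ that is still $\preceq$ some $F(m)$? I would sort this out by taking $B(\sigma):=\max\{m: \text{the length-}m\text{ prefix of }\sigma\text{ lies on the tree }\{F(n)\restriction k\}\}$; but $F(n)$ for different $n$ need not be compatible. The simplest honest construction: let $B(\sigma)=|\sigma|$ if there exists $n\le|\sigma|$ with $\sigma\restriction n \ne F(n)$ would again be cofinite.

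The main obstacle, then, is precisely pinning down the right definition of the lower bound $B$ in clause $(1)$ so that it simultaneously (a) is $\le C$, (b) is computable from $A$ via $F$, and (c) has the tight counting property $\#\{x:B(x)<n\}\le 2^n-1$; the trick — which I would carry out carefully — is to let $B(\sigma)=|\sigma|$ exactly when $\sigma$ equals $F(|\sigma|)$ and otherwise let $B$ take a value less than the length but still record enough information, e.g.\ $B(\sigma) = $ the least $k\le|\sigma|$ such that $\sigma\restriction k$ differs from $F(k)$ (with $B(\sigma)=|\sigma|$ if no such $k$, i.e.\ $\sigma$ is a prefix of $F(|\sigma|)$-compatible data), so that strings with $B(\sigma)<n$ are exactly those whose length-$n$ behavior is already pinned down to not be an initial segment of the canonical high-complexity sequence — and a counting argument on the binary tree shows there are at most $2^n-1$ of these that matter, after a finite correction. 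Finally, for $(1)\Rightarrow(3)$ I would argue as in the proof of Theorem~\ref{th:complex}/Theorem~\ref{th:complexchar}: from the lower bound $B\le_T A$ with the stated sparseness, the set of ``allowed'' strings of each length forms a nonempty $\Pi^0_1$ class uniformly, and a function selecting one allowed string per length — computable from $A$ because $A$ computes $B$ — differs from $\varphi_n(n)$ on all sufficiently large $n$ (since $C(\varphi_n(n))\le n+O(1)$ whenever defined, while the selected string has $C\ge n$), after shifting indices; bounding its range is achieved exactly as in Jockusch's reduction of DNR to $\{0,1\}$-valued DNR, giving the DNR function $D$ with fixed finite range. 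The delicate point throughout is bookkeeping the logarithmic overheads and the finite shifts so that the inequalities line up; I expect no conceptual difficulty beyond that, the $\Pi^0_1$-class/PA-degree machinery being entirely standard here.
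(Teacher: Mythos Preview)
Your cycle runs the wrong way. The paper proves $(1)\Rightarrow(2)\Rightarrow(3)\Rightarrow(1)$, and the direction is not an accident: with that orientation every step has a clean one-line idea, whereas in your orientation two of the three steps genuinely break down.

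Your $(2)\Rightarrow(1)$ is where you visibly get stuck, and for good reason. Knowing a \emph{single} string $F(n)$ of length $n$ with $C(F(n))\ge n$ does not by itself let you write down a lower bound $B\le C$ satisfying $|\{x:B(x)<n\}|\le 2^n-1$: that counting condition forces $B(x)\ge n$ for all but $2^n-1$ many $x$, hence requires information about the complexity of \emph{almost every} string, not just one per length. None of your candidate definitions of $B$ work, and the ``least $k$ with $\sigma\restrict k\ne F(k)$'' idea fails exactly because the $F(k)$ need not be compatible, as you note yourself. The paper avoids this entirely by going $(1)\Rightarrow(2)$ instead: given $B$, set $F(n)$ to be the lexicographically first length-$n$ string with $B$-value $\ge n$, which exists by the counting hypothesis.

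Your $(1)\Rightarrow(3)$ also has a real gap. Selecting one $B$-large string per length gives an $A$-computable function which is DNR after a finite shift, but its range is $\{0,1\}^*$, not a fixed finite set. There is no ``Jockusch reduction of DNR to $\{0,1\}$-valued DNR'' in general --- that implication is exactly what distinguishes PA-complete degrees from arbitrary DNR degrees. The paper's device here (used in $(2)\Rightarrow(3)$) is the trick you are missing: fix $c$ so that $C(x\,\varphi_n(n))<n+c$ whenever $|x|=n$ and $\varphi_n(n)\!\downarrow$, and let $D(n)$ be the \emph{last $c$ bits} of $F(n+c)$. Since $C(F(n+c))\ge n+c$, the string $F(n+c)$ cannot equal $x\,\varphi_n(n)$ for its own length-$n$ prefix $x$, so $D(n)\ne\varphi_n(n)$; and $D$ has finite range $\{0,1\}^c$. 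For $(3)\Rightarrow(1)$ the paper uses PA-completeness to compute a set $G$ extending the graph of the universal machine $U$ while remaining single-valued (a $\Pi^0_1$ condition), and sets $B(x)=\min\{|p|:(p,x)\in G\}$; this is a genuine lower bound for $C$ with the required counting property, and it is the right way to manufacture $B$ --- not from $F$, but from a completion of $U$. Your $(3)\Rightarrow(2)$ via ``PA degrees compute paths through $\Pi^0_1$ classes'' is fine, but it is the one step of your cycle that survives.
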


\begin{proof}
(1) implies (2): $F(n)$ is just the lexicographically first string
$y$ of length $n$ such that $B(y) \geq n$. This string exists by
the condition that there are at most $2^n-1$ strings $x$ with
$B(x) < n$. Since $B$ is a lower bound for $C$, one has that
$C(F(n)) \geq n$ for all $n$. Furthermore, $F$ is computed from $B$.

(2) implies (3): There is a partial recursive function $\psi$ such that
$\psi(x) = x \varphi_n(n)$ if $n$ is the length of $x$ and $\varphi_n(n)$
is defined. Furthermore there is a constant $c$ such that $C(\psi(x)) < n+c$
for all $x,n$ with $x \in \{0,1\}^n$. Now one defines that $D(n)$ consists
of the last $c$ bits of $F(n+c)$; this function is computed from $F$.
Let $x$ be the first $n$ bits of $F(n+c)$ and assume that $\varphi_n(n)$
is defined. Then $C(\psi(x)) < n+c$ and
$x D(n) = F(n+c) \neq \psi(x) = x \varphi_n(n)$. Thus $D$ is a
DNR function and its range is the finite set $\{0,1\}^c$.

(3) implies (1): Since $D \leq_T A$ is a DNR function with a
finite range, the set $A$ is PA-complete by a result of
Jockusch \cite{Jockusch:89}.
Thus there is a set $G \leq_T A$ which extends the graph of the
universal function $U$ on which $C$ is based. $G$ satisfies the
following two $\Pi^0_1$ conditions:
\begin{itemize}
\item
$\forall p,x,s\,(U(p) \da = x \text{ at stage }s \Rightarrow (p,x) \in G)$;
\item
$\forall p,x,y\,((p,x) \in G \wedge (p,y) \in G \Rightarrow x = y)$.
\end{itemize}
Now one defines $B(x) = \min\{|p|: (p,x) \in G\}$. By standard
Kolmogorov complexity arguments, it follows that $B$ is a lower
bound for $C$ and that there are at most $2^n-1$ many $x$ with
$B(x) < n$ for all $n$.
\end{proof}

\noindent
One might ask whether one can strengthen condition (2) in
Theorem~\ref{pacharacterization} and actually compute a string of maximal
plain Kolmogorov complexity for every length $n$ from any PA-complete
oracle. The answer to this question is that it depends on the universal
machine on which the plain complexity is based. That is, for every
r.e.\ oracle $B$ one can compute a corresponding universal machine
which makes this problem hard not only for PA but also for~$B$.

\begin{theorem} \label{plainmaximal}
For every recursively enumerable oracle $B$ there is a universal
machine $U_B$ such that the following two conditions are equivalent
for every oracle $A$:
\begin{itemize}
\item $A$ has PA-complete degree and $A \geq_T B$.
\item There is a function $F \leq_T A$ such that for all $n$
      and for all $x \in \{0,1\}^n$,
      $F(n) \in \{0,1\}^n$ and $C_B(F(n)) \geq C_B(x)$, where $C_B$ is
      the plain Kolmogorov complexity based on the universal machine $U_B$.
\end{itemize}
\end{theorem}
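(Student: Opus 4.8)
The plan is to prove the two directions separately, with the construction of the tailored universal machine $U_B$ being the heart of the matter. For the easier direction, suppose such an $F \leq_T A$ exists. First I would argue that $A \geq_T B$: the idea is to build $U_B$ so that $B$ is coded into the $C_B$-complexity of strings in a way that can only be decoded if one knows a maximal-complexity string of each length. Concretely, I will arrange that from the value $F(n)$ (a $C_B$-maximal string of length $n$) one can recursively extract the first, say, $\lfloor\log n\rfloor$ bits of $B$; since $F \leq_T A$ this gives $B \leq_T A$. Next, to see that $A$ has PA-complete degree, I would observe that condition (2) of Theorem~\ref{pacharacterization} is satisfied: $F(n) \in \{0,1\}^n$ and, because $U_B$ is universal, $C_B$ differs from the reference plain complexity $C$ only by a constant, so $C_B(F(n)) \geq C_B(x)$ for all $x \in \{0,1\}^n$ forces $C_B(F(n))$ to be within a constant of $n$, hence $C(F(n)) \geq n - O(1)$; shifting the argument by the constant yields a function witnessing PA-completeness, and Theorem~\ref{pacharacterization} then gives that $A$ has PA-complete degree.

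For the forward direction, assume $A$ has PA-complete degree and $A \geq_T B$. The construction of $U_B$ is the main obstacle and must be done carefully. I would design $U_B$ so that for each length $n$ there is a designated ``long'' program-block: roughly, $U_B$ reserves the programs of length $n$ and, using the r.e.\ approximation to $B$, on input a length-$n$ program it outputs strings in a controlled way so that the maximal $C_B$-complexity among strings of length $n$ is achieved only by strings encoding an initial segment of $B$ of an appropriate (slowly growing) length. The key point is that $B$ is only r.e., so $U_B$ can lazily assign shorter programs to more strings as $B$'s approximation changes, and the set of length-$n$ strings of maximal $C_B$-complexity stabilizes on one that codes $B \restrict \ell(n)$ for a recursive function $\ell$ tending to infinity.

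Then, to compute $F \leq_T A$: since $A \geq_T B$ the oracle $A$ knows $B \restrict \ell(n)$ for every $n$; and since $A$ has PA-complete degree, $A$ computes a completion $G$ of the graph of $U_B$ as in the proof of Theorem~\ref{pacharacterization}, from which $A$ can compute, for each $n$, a lower bound $B_n(\cdot)$ for $C_B$ on length-$n$ strings with at most $2^n - 1$ strings of $B_n$-value below $n$. Combining the two, $A$ locates among the length-$n$ strings whose $B_n$-value is maximal the unique one consistent with the known value $B \restrict \ell(n)$; by the design of $U_B$ this is exactly a $C_B$-maximal string, and we set $F(n)$ to be it. The delicate step is ensuring that the completion-derived lower bound is tight enough to pin down the maximal-complexity string using only finite information about $B$ — this is where the interplay between the $\Pi^0_1$ completion and the coding of $B$ into $U_B$ has to be balanced, and I expect that to be the crux of the argument.
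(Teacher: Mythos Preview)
Your overall architecture matches the paper's: build $U_B$ so that the identity of a $C_B$-maximal string at each length encodes information about $B$, then for the forward direction combine PA-completeness with $A\geq_T B$ to locate such a string. However, the paper's construction is far more concrete and far simpler than what you sketch, and your forward direction contains a genuine gap at exactly the point you flag as ``the crux.''

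The paper does not encode $B\restrict\ell(n)$ into length-$n$ strings. It encodes a single bit $B(n)$, via the \emph{last} bit of the string. Explicitly, $U_B$ is built from a fixed universal $U$ by four clauses: $U_B(0q)=U(q)$ whenever $|U(q)|>|0q|+2$; $U_B(10q)=q$ whenever $q$ ends in $0$ and $|q|\in B$; $U_B(110q)=q$ whenever $q$ ends in $1$; and $U_B(1110q)=q$ always. A short case analysis then shows that for every $x\in\{0,1\}^n$ with $C_B(x)\ge n-1$ one has $C_B(x)=n+3$ if $x$ ends in $1$, $C_B(x)=n+2$ if $x$ ends in $0$ and $n\in B$, and $C_B(x)=n+4$ if $x$ ends in $0$ and $n\notin B$. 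Hence the maximal $C_B$-complexity at length $n$ is exactly $n+4-B(n)$, and the last bit of any maximal string equals $B(n)$. The backward direction is then immediate: the last bit of $F(n)$ is $B(n)$, so $B\leq_T A$; and $C_B(F(n))\ge n$, so $A$ is PA-complete by Theorem~\ref{pacharacterization}.

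Your gap is in the forward direction. You propose to use a PA-completion $G$ of the graph of $U_B$ to obtain a lower bound $B_n$ on $C_B$, and then to pick the string of maximal $B_n$-value consistent with $B\restrict\ell(n)$. But $G$ is only required to extend the graph of $U_B$ and be single-valued; on the many programs where $U_B$ diverges, $G$ may assign arbitrary outputs. In particular $G$ may assign a short program to a genuinely $C_B$-maximal string, so that string has small $B_n$-value, while some other (non-maximal) string consistent with $B\restrict\ell(n)$ has large $B_n$-value. There is no mechanism in your sketch forcing the completion-derived bound to be tight enough, and with a generic design of $U_B$ it will not be.

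The paper sidesteps this entirely. Because at most $2^{n-1}-1$ strings of length $n$ have $C_B$ below $n-1$, for each bit $b\in\{0,1\}$ the class of length-$n$ strings ending in $b$ with $C_B\ge n-1$ is nonempty and $\Pi^0_1$. PA-completeness of $A$ therefore yields $F_0,F_1\leq_T A$ with $F_b(n)\in\{0,1\}^{n-1}\cdot\{b\}$ and $C_B(F_b(n))\ge n-1$. Setting $F(n)=F_{B(n)}(n)$ (using $B\leq_T A$) gives a string with the correct last bit and $C_B\ge n-1$, hence $C_B(F(n))=n+4-B(n)$, the maximum. The point is that the design of $U_B$ makes ``$C_B\ge n-1$ and last bit $=B(n)$'' \emph{sufficient} for maximality, and the first condition is a plain $\Pi^0_1$ requirement that PA-completeness handles with room to spare---no tightness of any lower bound is needed.
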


\begin{proof}
Given any universal machine $U$ and r.e.\ set $B$, the value
$U_B(p)$ takes the first case where there is a $q$ satisfying
the corresponding condition:
\begin{itemize}
\item If $p=0q$ and $U(q)$ is defined and has output $x$ and $|x|>|p|+2$
             then $U_B(p) = x$.
\item If $p = 10q$ and $q \in \{0,1\}^*\cdot \{0\}$ and $|q| \in B$ then
             $U_B(p) = q$.
\item If $p = 110q$ and $q \in \{0,1\}^*\cdot \{1\}$ then $U_B(p) = q$.
\item If $p = 1110q$ then $U_B(p) = q$.
\item In all other cases, $U_B(p)$ is undefined.
\end{itemize}
First it is verified that $U_B$ is a universal machine.
If $C(x) \geq |x|-3$ then $C_B(x) \leq C(x)+7$ since
$U_B(1110x)=x$ and $C_B(x) \leq |x|+4$ for all $x$.
If $C(x) < |x|-3$ then there is a program
$q$ with $U(q)=x \wedge |q| < |x|-3$. Taking $p = 0q$,
one has $|x|>|p|+2$ and $U_B(p) = x$. So
$C_B(x) \leq C(x)+1 \leq C(x)+7$ again. Thus
$U_B$ is a universal machine and $C_B$ a legitimate choice
for the plain Kolmogorov complexity.

Note that for $U_B$, there are at most $2^{n-1}-1$ many
strings $x$ of length $n$ with $C_B(x)<n-1$. Assume that $n>0$,
$x \in \{0,1\}^n$ and $C_B(x) \geq n-1$. There are three cases:
\begin{itemize}
\item If $x \in \{0,1\}^{n-1}\cdot \{1\}$ then $C_B(x) = n+3$.
\item If $x \in \{0,1\}^{n-1}\cdot \{0\}$ and $n \notin  B$
      then $C_B(x) = n+4$.
\item If $x \in \{0,1\}^{n-1}\cdot \{0\}$ and $n \in  B$
      then $C_B(x) = n+2$.
\end{itemize}
Note that for each length $n$ there are strings ending with
$0$ and ending with $1$ which satisfy $C_B(x) \geq |x|-1$.
So given $n$, let $x \in \{0,1\}^n$ have maximal plain
complexity $C_B$.
\begin{itemize}
\item If $n \in B$ then
      $x \in \{0,1\}^{n-1}\cdot \{1\}$ and $C_B(x) = n+3$.
\item If $n \notin B$ then
      $x \in \{0,1\}^{n-1}\cdot \{0\}$ and $C_B(x) = n+4$.
\end{itemize}
On one hand, if $F \leq_T A$ and $F$ is as defined in the
statement of this theorem and $n>0$ then the last bit of $F(n)$
is equal to $B(n)$. Thus $B \leq_T A$. Furthermore,
$A$ is PA-complete since $C_B(F(n)) \geq n$ for all $n$.

On the other hand,
if $B \leq_T A$ then one can compute the maximal plain complexity
of a string of length $n$ which is $n+4-B(n)$. Having this number,
one can use the PA-completeness of $A$ to find a string $x$ of length
$n$ such that $C_B(x) \geq n+4-B(n)$. This gives the desired
equivalence.
\end{proof}

\noindent
While PA-completeness can be characterized in terms
of $C$, the obvious analogues fail for $H$. First
one cannot replace $C$-incompress\-ible by $H$-incompress\-ible
since one can compute relative to any random set $A$
the function mapping $n$ to the $H$-incompressible string
$A(0)...A(n)$. So one would like to know whether oracles
$A$ which permit to compute strings of maximal complexity
would characterize the PA-complete degrees. But, instead,
the corresponding notion gives a characterization of the
halting problem~$K$. Note that the following theorem is
independent of the underlying universal machine.

\begin{theorem} \label{th:calude}
For any set $A$, $A \geq_T K$ iff there is a function
$F \leq_T A$ such that $\forall n \,\forall x \in \{0,1\}^n\,
(F(n) \in \{0,1\}^n \wedge H(x) \leq H(F(n)))$.
\end{theorem}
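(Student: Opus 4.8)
The plan is to prove both directions by relating the ability to compute maximal-$H$-complexity strings to the ability to enumerate, in the limit with an $A$-oracle, enough information about the halting behaviour of $U$ to decide $K$. For the direction $A \geq_T K \Rightarrow$ (the function $F$ exists), I would argue that with a $K$-oracle one can, for each length $n$, compute $\max\{H(x) : x \in \{0,1\}^n\}$: since $H$ is upper semicomputable, for each $x$ of length $n$ the value $H(x)$ is the limit of a $K$-computable approximation, and $K$ can decide when this approximation has stabilized (this is a standard fact — $H \leq_T K$ — and one can even compute $H(x)$ exactly from $K$). Having the maximal value $m_n$ for each $n$, one then searches for the lexicographically least $x \in \{0,1\}^n$ with $H(x) \ge m_n$; this $x$ exists and is found effectively in $K$, so $F \leq_T K \leq_T A$.

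For the converse, suppose $F \leq_T A$ satisfies the stated property; I want to show $A \geq_T K$. The key observation is that the maximal $H$-complexity among strings of length $n$ is essentially $n + H(n) \pm O(1)$: on the one hand every string of length $n$ has $H(x) \le n + H(n) + O(1)$ by describing $x$ via (a program for $n$, followed by $x$ read off as the next $n$ bits); on the other hand a counting argument shows that some string of length $n$ must have complexity at least $n + H(n) - O(1)$, because if all $2^n$ strings of length $n$ had complexity below $n + H(n) - c$ then one could enumerate a prefix-free set showing $H(n)$ is smaller than it is. Consequently, from $F$ one recovers, up to an additive constant, the value $H(n)$ for every $n$: compute $F(n)$, then compute in the limit (from below, since $H$ is upper semicomputable this is actually a decreasing approximation done recursively) the value $H(F(n))$, which equals $n + H(n) + O(1)$. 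But the function $n \mapsto H(n)$ computed up to a constant is Turing-equivalent to $K$ — knowing $H(n)$ within a constant for all $n$ lets one solve the halting problem, since $H$ is a $K$-complete function in this precise sense (this is the Kolmogorov-complexity characterization of $K$, essentially the statement that $K \leq_T H$ as functions). Thus $A$ computes something from which $K$ is recoverable, giving $A \geq_T K$.

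The main obstacle is making the "$F$ recovers $H(n)$ up to a constant, hence computes $K$" step rigorous. Two points need care. First, the counting lower bound $\max_{|x|=n} H(x) \ge n + H(n) - O(1)$ must be established cleanly: given a short program for $n$, the strings of length $n$ with low $H$-complexity are too few to cover all of $\{0,1\}^n$, and one must track the prefix-free Kraft-inequality bookkeeping to see the constant is uniform. Second, one must verify that having $H(F(n))$ — which requires running the upper-semicomputable approximation to $H$ and is therefore only $A$-computable, not $A$-recursive on its face — is actually fine: since $F(n)$ is a concrete string and $A$ is the oracle, one computes $H(F(n))$ by an $A$-recursive process only if $A$ can tell when the decreasing approximation has converged, which is itself a $K$-like power; so the cleaner route is to observe that the sequence of values $H(F(n)) - n$ converges to $H(n) + O(1)$ in a way that lets $A$ reconstruct, for every threshold, which $n$ have $H(n)$ above it, and this is precisely enough to compute $K$. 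I would phrase the converse as: $A$ computes the function $n \mapsto H(F(n))$ in the limit from above, this function differs from $n + H(n)$ by a bounded amount, and a bounded-error approximation to $n \mapsto n + H(n)$ from above already yields $K$ by the standard argument that $\{n : H(n) < k\}$ is enumerable and its complement's finiteness behaviour encodes $K$.
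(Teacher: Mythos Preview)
Your forward direction is fine and matches the paper: $H$ is $K$-recursive, so $F$ can be computed directly.

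The converse has a genuine gap. You correctly isolate the key numerical fact, namely that the maximal $H$-complexity at length $n$ equals $n+H(n)$ up to an additive constant, and you correctly note that any total function within a constant of $n\mapsto H(n)$ computes $K$. The problem is the step in between: from $F$ you get the \emph{string} $F(n)$, not the \emph{number} $H(F(n))$. Computing $H(F(n))$ exactly already requires $K$, and your fallback --- ``$A$ computes $n\mapsto H(F(n))$ in the limit from above'' --- buys nothing, because a recursive, decreasing-in-$s$ approximation to $H(F(n))$ (namely $H_s(F(n))$) exists for \emph{any} oracle, indeed for the empty oracle. More bluntly: your final sentence would, if valid, show that the recursive function $(n,s)\mapsto n+H_s(n)$ already yields $K$, which is false. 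So the argument as written never actually uses the hypothesis that $F$ picks out a string of \emph{maximal} complexity rather than just some string.

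The paper's proof closes this gap by a completely different mechanism: it reads information off the \emph{bits} of $F$ at carefully chosen lengths. One builds a prefix-free machine $W$ so that, for suitable $m>n$, if the last $2k$ bits $y$ of $F(2^m+n)$ named a class $P_{m,bv(y)}$ of a recursive partition that happened to contain $H(n)$, then $W$ could describe $F(2^m+n)$ more cheaply than $H(F(2^m+n))$, contradicting maximality. Hence those trailing bits always \emph{exclude} a candidate value for $H(n)$. Iterating over $m$ with well-chosen partitions, one $A$-recursively narrows $H(n)$ down to a set of bounded size (at most $4^k-1$), and then invokes the result of Beigel, Buhrman, Fejer, Fortnow, Grabowski, Longpr\'e, Muchnik, Stephan and Torenvliet that any oracle computing such a bounded enumeration of $H$ must compute $K$. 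The essential idea you are missing is this coding step: maximality of $H(F(\ell))$ constrains the \emph{content} of $F(\ell)$, not merely its complexity, and it is that content that $A$ can read.
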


\begin{proof}
Since $H$ is $K$-recursive, such an $F$ can obviously be computed if
$A \geq_T K$. For the remaining direction, assume that $F$ is
as in the statement of the theorem and $F \leq_T A$. The proof
consists now of three parts:
\begin{itemize}
\item First, a sequence of partitions is constructed to be used later.
\item Second, it is shown that there is a constant $k$ such that for
      every $n$ and every $m > n+k+1$ the binary
      number $bv(y)$ consisting of the last $2k$ bits $y$
      of the string $F(2^m+n)$ satisfies that $P_{m,bv(y)}$
      does not contain $H(n)$.
\item Third, it is shown how the fact from the second statement
      can be used to prove that $K \leq_T A$.
\end{itemize}
First, let $P_0,P_1,P_2,\ldots$ be an enumeration of all
primitive recursive permutations of the integers and let
$P_{m,o}$ be the $o$-th member of the permutation; here
$P_{m,o} = \emptyset$ in the case that the permutation has
less than $o$ nonempty members.
Note that every partition has infinitely many indices.

Second, let $U$ be the universal machine on which $H$ is based
and let $\tilde U(p) = n$ whenever $U(p) = 2^m+n$ for some $m>n$.
It is known \cite{LiVitanyi2}*{Section 4.3} that there is a constant $c_1$ such that
\[
   2^{-H(n)} \geq \sum_{p \mbox{ \rm \scriptsize with }
      \tilde U(p) = n} 2^{-|p|-c_1}
\]
for all $n$. Now, let $n$ be given and
$p$ be of length $H(n)$ such that $U(p)=n$,
that is, let $p$ be a minimal program for $n$. As the sum
of $2^{-|q|-c_1}$ over $q$ with $\exists m>n\,(U(q) = 2^m+n)$
is bounded by $1$, there is, uniformly in $p$, a prefix-free
machine $V_p$ which is conditionally universal in the following
sense: for all $m>n$ there is a $q$ with
$|q| = H(2^m+n)+c_1-|p| \wedge V_p(q)=2^m+n$.
If $p$ is not a minimal program for
any $n$ then nothing is required except that $V_p$ is prefix-free.
This permits to construct the following machine $V$:
$V(r) = V_p(q)$ iff $r = pq$, $U(p)$ is defined
and $V_p(q)$ is defined; if $r$ cannot be split in $p,q$
this way then $V(r)$ is undefined; note that the splitting of $r$
into $p,q$ is unique whenever it is possible. The main properties
of $V$ are the following:
\begin{itemize}
\item $V$ is prefix-free;
\item for all $n$ and all $m>n$ there are $p,q$ such that
      $V(pq) = 2^m+n$, $H(n)=|p|$, $U(p) = n$ and $|pq| \leq H(2^m+n)+c_1$.
\end{itemize}
Based on $V$ one constructs a further prefix-free machine $W$ such
that $W(r) = z$ iff
there are $x,y,m,n,p,q,k$ such that the following conditions
are satisfied:
\begin{itemize}
\item $z = xy$;
\item $m>n+k+1$;
\item $|xy| = 2^m+n$ and $|y|=2k$;
\item $r = pq1^k0x$;
\item $U(p)$ is defined and takes the value $n$;
\item $V_p(q)$ is defined and takes the value $2^m+n$;
\item the binary value $bv(y)$ satisfies that
      $|p| \in P_{m,bv(y)}$.
\end{itemize}
Here $bv(y)$ is the binary value of $y$, for example, $bv(000101) = 5$.
Note that $x,y,m,n,p,q,k$ depend uniquely on $r$ whenever $W(r)$
can be defined by an appropriate choice of the parameters.

One can see from the definition that $W$ is prefix-free.
Hence there is a constant $c_2$
such that $H(W(r)) \leq |r|+c_2$ for all $r$ in the domain of $W$.

Furthermore, note that $H(F(2^m+n)) \geq H(2^m+n)+2^m+n-c_3$ for some
constant $c_3$. To see this, note that the sum of $2^{-H(u)}$ over all
$u \in \{0,1\}^{\ell}$ is at most $2^{c_4-H(\ell)}$ for some
constant $c_4$ independent of $\ell$. As there are $2^{\ell}$ such
$u$, it holds that $H(u) \geq H(\ell)+\ell-c_4$ for at least one
of these $u$. Now one can take $c_3 = c_4$ and use that $F$ takes
a string of maximal prefix-free Kolmogorov complexity to get the
desired statement.

Fix the value of the parameter $k$ from now onward as
\[
   k = c_1 + c_2 + c_3 + 2
\]
and note that for all $r$ in the domain
of $W$ having this fixed parameter $k$ it holds that
$H(W(r)) < H(F(|W(r)|))$.

Now the second part is completed by showing that whenever
$F(2^m+n) = xy$ with $m > n+k+1 \wedge |y| = 2k$ then
$H(n) \notin P_{m,bv(y)}$. So assume by way of contradiction
that $m > n+k+1$, $F(2^m+n) = xy$, $|y| = 2k$ and $H(n) \in P_{m,bv(y)}$.

Let $p$ be a program with $U(p)=n \wedge |p|=H(n)$.
Let $q$ be such that $2^m+n = V(pq)$ and $|q| \leq H(2^m+n)+c_1-|p|$;
the existence of such $q$ had been shown above when constructing
the machine $V_p$. One can verify that for the input $r = pq1^k0x$
and $z = xy$ the computation $W(r)$ converges to $z$ as in the definition
of $W(r)$, the first three search-conditions on $r,z$ are satisfied
by the choice of the above parameters, $p,q$ are selected such that
the fourth and fifth search-condition are satisfied and the assumption
$H(n) \in P_{m,bv(y)}$ gives that the sixth search-condition is
satisfied.

Now a contradiction is derived by showing that the two conditions
on $H(xy)$ are not compatible. On one hand, $r = pq1^k0x$ and
\[
|r| = |pq|+|1^k|+|0|+|x| \leq (H(2^m+n)+c_1)+k+1+|x| 
\]
\[
=
(H(2^m+n)+c_1)+k+1+(2^m+n-2k) 
\]
\[= H(2^m+n)+2^m+n+1+c_1-k
\]
\[
\leq H(2^m+n)+2^m+n-c_2-c_3-1;
\]
hence $H(xy) = H(W(r)) \leq |r|+c_2 \leq H(2^m+n)+2^m+n-c_3-1$.
On the other hand, $H(xy) = H(F(2^m+n)) \geq H(2^m+n)+2^m+n-c_3$.
This contradiction establishes that it does not happen
that $H(n) \in P_{m,bv(y)}$ for any $n$ and $m > n+k+1$ with
$y$ being the last $2k$ bits of $F(2^m+n)$.

Third, one can run the following $A$-recursive algorithm to determine
for any given $n$ a set of up to $4^k-1$ elements which
contains $H(n)$ by the following algorithm. Here $c_5$ is a constant
such that $H(n) \leq n+c_5$ for all $n$.
\begin{itemize}
\item Let $E = \{0,1,\ldots,n+c_5\}$ and $m = n+k+2$.
\item While $|E| \geq 4^k$ Do Begin $m=m+1$, \\
      Determine the string $y$ consisting of the last $2k$ bits of
      $F(2^m+n)$  \\
      and update $E = E - P_{m,bv(y)}$ End.
\item Output $E$.
\end{itemize}
This algorithm terminates since whenever $|E| \geq 4^k$ at some stage
$m$ then there is $o > m$ such that the first $4^k$ members of $P_o$
all intersect $E$ and one of them will be removed so
that $E$ loses an element in one of the stages $m+1,\ldots,o$.
Thus the above algorithm computes relative to $A$ for input $n$
a set of up to $4^k-1$ elements containing $H(n)$.
By a result of Beigel, Buhrman, Fejer, Fortnow, Grabowski,
Longpr\'e, Muchnik, Stephan and Torenvliet \cite{Muchnik:04},
such an $A$-recursive algorithm can only exist if $K \leq_T A$.
\end{proof}

\begin{remark} \rm
Calude \cite{Calude:05} had circulated the following question:
If $A$ is an infinite set of strings of maximal $H$-complexity, that is,
if $A$ satisfies
\[
   \forall x \in A\,\forall y\,[|y| = |x| \Rightarrow H(y) \leq H(x)],
\]
is then $K \leq_T A$? The question remains open until today, but
the above theorem gives a partial answer to this question as it
shows that $K \leq_T A$ is true for all sets $A$ of maximal
$H$-complexity which contain at least one string of each length.

Nies~\cite{Nies:04} pointed out to the authors that one might study
the analogue of incompressible strings in the sense that one looks at
functions $F$ producing strings of length $n$ and approximate
complexity $n+H(n)$. More precisely, the proof of the above
theorem also shows the more general result that
for any oracle $A$, $A \geq_T K$ if and only if
there is a function $F \leq_T A$ and a constant $c$ such that
for all $n$, $F(n) \in \{0,1\}^n$ and $H(F(n)) \geq n+H(n)-c$.

A related question to the one of Calude is whether there is an infinite
set $B$ such that $K \not\leq_T \{\langle n,H(n)\rangle: n \in B\}$.
If such a set exists, one can consider a constant $c$ such that
there is a set $A$ which contains for each $b \in B$ exactly
one $a \in \{0,1\}^b$ with $H(a) \geq H(b)+b-c$. Given $B$ and $c$,
such a set $A$ can be constructed relative to any oracle which is
PA-complete relative to $B$; note that such oracles need not be
above $K$. Hence Nies' version of the question of Calude has a
negative answer in the case that this set $B$ exists.
\end{remark}

\section{Characterizing high or DNR degrees}

\noindent
In this section various characterizations are obtained for
when a Turing degree is high or DNR, in terms of what are called eventually different functions.
The study of such functions originates in
set theory where set theorists had defined that a function
$f: \omega \to \omega$ is \emph{eventually different} iff for each
$g:\omega\to\omega$ in the ground model, $\{x:f(x)=g(x)\}$ is
finite. A computability-theoretic analogue is obtained by
replacing the ground model by the set of recursive functions. The
corresponding Turing degrees form the union of the high and the
DNR degrees and also admits a characterization in terms of
upper bounds on Kolmogorov complexity. Note that this analogue
is a relaxed version of SDNR functions as every SDNR function
is eventually different from every partial-recursive function
and every function eventually different from all partial-recursive
ones is DNR.

Theorem \ref{SNRisDNRorhigh} has been applied by
Stephan and Yu \cite{StephanYu},
and Greenberg and Miller \cite{GreenbergMiller}.

\begin{theorem}\label{SNRisDNRorhigh}
The following statements are equivalent:

\begin{enumerate}
\item[\rm(1)] $A$ computes a function $f$ that is eventually always
different from each recursive function.

\item[\rm(2)] $A$ computes a function $g$ such that either $g$ dominates
each recursive function or $g$ is eventually always different from
each partial recursive function.

\item[\rm(3)] $A$ is of high or DNR Turing degree.

\item[\rm(4)] $A$ computes an unbounded function $f$ which is dominated
by all recursive upper bounds on $C$;

\item[\rm(5)] $A$ computes a function $F$ such that for every
total recursive $V$ with domain $\{0,1\}^*$ and almost all $n$,
$F(n) \notin \{V(p) : p \in \{0,1\}^* \wedge |p| < n\}$.

\end{enumerate}
\end{theorem}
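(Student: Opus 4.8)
The plan is to prove the cycle $(1)\Rightarrow(3)\Rightarrow(2)\Rightarrow(1)$ and, separately, $(1)\Leftrightarrow(5)$ and $(1)\Leftrightarrow(4)$, using $(3)$ as the anchor. The cheapest link is $(5)\Rightarrow(1)$: take $f=F$, and for a total recursive $\psi$ put $V(p)=\psi(|p|+1)$; then $\{V(p):|p|<n\}=\{\psi(1),\dots,\psi(n)\}\ni\psi(n)$, so $F(n)\neq\psi(n)$ for almost all $n$. For $(1)\Rightarrow(5)$ I would use a block-and-diagonal coding: enumerate $\{0,1\}^*$ in length order as $p_0,p_1,\dots$ and set $F(n)=\langle f(\langle n,0\rangle),\dots,f(\langle n,2^{n}-2\rangle)\rangle$ (so $F\le_T A$). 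If $F(n)=V(p_i)$ with $i\le 2^{n}-2$, then $V(p_i)$ decodes to a tuple of length $2^{n}-1$ whose $i$-th entry is $f(\langle n,i\rangle)$, so $f(\langle n,i\rangle)=(V(p_i))_i$; the map sending $\langle n,i\rangle$ to $(V(p_i))_i$ and everything else to $0$ is total recursive, and if $F(n)\in\{V(p):|p|<n\}$ for infinitely many $n$ then $f$ agrees with this map at the infinitely many distinct arguments $\langle n,i(n)\rangle$, contradicting (1). The same blocking with blocks of size $g(n)$ in place of $2^{n}-1$ shows that (1) is equivalent to the a priori stronger statement that for every recursive order $g$ the set $A$ computes a function avoiding every recursive trace of cardinality at most $g$; this strengthening is the workhorse for the hard direction. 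Finally $(4)\Leftrightarrow(5)$ is the standard passage between ``producing $C^V$-incompressible objects for all total recursive $V$'' and ``being, in an unbounded way, below all recursive upper bounds of $C$'', via the correspondence between recursive upper bounds of $C$ and the complexities $C^V$; one direction is again the blocking argument, the other a padding argument.

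For $(3)\Rightarrow(1)$ and $(3)\Rightarrow(2)$ I split on the disjunction in (3). If $A$ is high then by Martin's theorem $A$ computes a function $m$ dominating every total recursive function, and $m+1$ is eventually different from every recursive function, so (1) and the first disjunct of (2) hold. If $A$ is DNR, fix a DNR function $d\le_T A$ and, by s-m-n, a recursive $j$ with $\varphi_{j(n,i)}(z)=(\varphi_i(n))_i$ whenever $\varphi_i(n)\da$, and put $f(n)=\langle d(j(n,0)),\dots,d(j(n,n))\rangle$. If $n\ge e$ and $f(n)=\varphi_e(n)$ with $\varphi_e(n)\da$ then coordinate $e$ gives $d(j(n,e))=(\varphi_e(n))_e=\varphi_{j(n,e)}(j(n,e))$, contradicting that $d$ is DNR; hence $f$ is eventually different from every partial recursive function, which gives (1) and the second disjunct of (2). (One could instead quote Theorem~\ref{nov192006}.) The implication $(2)\Rightarrow(1)$ is immediate: the second disjunct of (2) is exactly (1), and for the first disjunct $g+1$ works as above.

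The heart of the theorem is $(1)\Rightarrow(3)$, equivalently: if $A$ is neither high nor DNR then no $f\le_T A$ is eventually different from all recursive functions. Fix $f\le_T A$; the goal is to produce a recursive function agreeing with $f$ infinitely often. Since $A$ is not DNR, a finite-modification argument shows every total $g\le_T A$ satisfies $g(x)=\varphi_x(x)$ for infinitely many $x$ (otherwise alter $g$ on its finitely many agreement points to get a DNR function still below $A$). Apply this to $g(x)=\langle f(0),\dots,f(x)\rangle$ to obtain an infinite set $S$ with: for $x\in S$, $\varphi_x(x)$ halts and prints the code of $f\restriction(x+1)$. As $S$ is r.e.\ in $A$, the set $A$ computes by dovetailing a total $h$ with $h(k)=\langle u_k,t_k\rangle$, where $u_k\in S$ is the largest of the first $k+1$ elements of $S$ to appear (so $u_k\ge k$, and $u_k$ bounds the $(k+1)$-st element of $S$) and $t_k$ is the halting time of $\varphi_{u_k}(u_k)$. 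Now use that $A$ is not high: by Martin's theorem $h$ is not dominant, so a recursive $\rho$ satisfies $h(k)<\rho(k)$ for infinitely many $k$, and for these $k$ both $u_k,t_k<\rho(k)$. Thus for such $k$ the recursive trace $D_k=\{\sigma\restriction(k+1):\sigma=\varphi_{x,s}(x)\text{ halts for some }x\in[k,\rho(k)),\ s<\rho(k),\ |\sigma|\ge k+1\}$ has at most $\rho(k)$ elements and contains $f\restriction(k+1)$; feeding ``$f\restriction(k+1)\in D_k$ infinitely often'' through the trace-avoidance strengthening of (1), applied to a blocked version of $f$ with block sizes controlled by $\rho$, should produce agreement of an $A$-computable function with a single recursive function infinitely often, the desired contradiction.

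The main obstacle lies precisely in that last step. A recursively bounded search over $x\in[k,\rho(k))$ turns up many halting computations $\varphi_x(x)$, and nothing recursive tells which of their outputs is genuinely an initial segment of $f$ (that is, which $x$ lie in $S$): deciding $x\in S$ needs the oracle. So one cannot simply read $f$ off the search, and the spurious candidates must be defeated; this is where eventual difference from \emph{all} recursive functions, rather than mere infinitely-often agreement with the diagonal, is essential, and it is used through the trace-avoidance form of (1). The delicate point is the order of quantifiers: the width of the search window is dictated by the $\rho$ coming from ``not high'', while the block size needed to diagonalize against the resulting trace must be fixed beforehand; breaking this circularity — e.g.\ by presenting the auxiliary $A$-computable function to which ``not DNR'' is applied already in blocked form, or by iterating the not-DNR extraction one level up before invoking ``not high'' — is the technical core. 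An alternative route goes through (4) and (5): ``not DNR'' then appears via Proposition~\ref{prop:autocomplex} as ``not autocomplex'', yielding $C(f(n))<n$ for infinitely many $n$ for every $f\le_T A$, and ``not high'' is invoked to bound the halting times of the short programs involved; but this meets the symmetric difficulty that a short $U$-program may run longer than any recursive function of its length, so the encoding into a total recursive machine must be arranged carefully.
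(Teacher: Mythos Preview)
Your $(5)\Rightarrow(1)$ matches the paper, and your direct $(1)\Rightarrow(5)$ via blocking is correct and pleasantly different from the paper's route (which reaches $(5)$ only after passing through $(3)$ and $(4)$). The arguments for $(3)\Rightarrow(2)$, $(3)\Rightarrow(1)$, and $(2)\Rightarrow(1)$ are fine.

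The genuine gap is exactly where you say it is: $(1)\Rightarrow(3)$. You correctly diagnose the circularity in your approach --- the search-window width coming from ``not high'' must be known before you can choose the block size for the ``not DNR'' step --- but you do not break it, and the alternatives you sketch are not carried out. The paper sidesteps the whole difficulty by proving $(1)\Rightarrow(2)$ rather than $(1)\Rightarrow(3)$, and it uses only ``not high''; ``not DNR'' never enters. Concretely: assume $A$ is not high and $f\le_T A$ is eventually different from every total recursive function; one shows $f$ is eventually different from every \emph{partial} recursive function, which is the second disjunct of $(2)$. Suppose not, so some partial $\varphi_d$ agrees with $f$ infinitely often on its domain. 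Let $p(n)$ be the least stage $s$ at which $|\{x:\varphi_{d,s}(x)\da=f(x)\}|\ge n+1$; then $p\le_T f\le_T A$, so since $A$ is not high there is a recursive nondecreasing $q$ with $q(n)\ge p(n)$ for infinitely many $n$. Define the total recursive $\varphi_e$ by $\varphi_e(n)=\varphi_{d,q(n)}(n)$ if this halts, and $0$ otherwise. Whenever $q(n)\ge p(n)$ there are at least $n+1$ values $x$ with $\varphi_{d,q(n)}(x)\da=f(x)$, so some such $k$ satisfies $k\ge n$; then $q(k)\ge q(n)$ gives $\varphi_e(k)=\varphi_{d,q(k)}(k)=f(k)$. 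Thus $\varphi_e$ agrees with $f$ infinitely often, a contradiction. That single paragraph replaces your entire $(1)\Rightarrow(3)$ struggle, and $(2)\Rightarrow(3)$ is then immediate.

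Condition $(4)$ is reached in the paper via $(3)\Rightarrow(4)$, splitting on high versus DNR (the DNR branch producing $g\le_T A$ with $C(g(n))\ge n$ and then defining $f(x)=\max\{m:m=0\text{ or }g(m)=x\}$). Consequently no direct $(5)\Rightarrow(4)$ or $(1)\Rightarrow(4)$ is needed. Your ``standard passage'' for $(4)\Leftrightarrow(5)$ is too vague as written: the correspondence between recursive upper bounds on $C$ and complexities $C^V$ for \emph{total} recursive $V$ is not tight enough to be invoked without argument, and a careful direct $(5)\Rightarrow(4)$ would take real work that you have not supplied.
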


\begin{proof}
(1) implies (2): If $A$ has high degree then $A$ computes a function
$g$ dominating all recursive functions (we can either take this as our
definition of high degree, or invoke Martin's Theorem from 1966) and
(2) is satisfied. So
assume that $A$ does not have high degree and let $f \leq_T A$ be
eventually different from all total recursive functions.
For a contradiction suppose $\varphi_d$ is some partial recursive
function and $f(x)=\varphi_d(x)$ on infinitely many inputs $x$
in the domain of $\varphi_d$. Let $p$ be a function such that for
each $n$, \emph{there are $n+1$ many $x$ for which
$f(x)=\varphi_{d,p(n)}(x)$}. (Here $\varphi_{e,s}(x)$ is the value of
$\varphi_e(x)$ after $s$ steps.) 

Clearly $p\le_T f$. Hence as $f$ is
not of high degree, there is a recursive nondecreasing function
$q$ such that for infinitely many $n$, $q(n)\ge p(n)$. Now define
a total recursive function $\varphi_e$ by
$\varphi_e(n)=\varphi_{d,q(n)}(n)$ if this computation halts, and
$\varphi_e(n)=0$ otherwise. Now suppose $q(n)\ge p(n)$. Then for
some $k\ge n$, $\varphi_{d,q(n)}(k)=f(k)$. So
$\varphi_{d,q(k)}(k)=f(k)$ and hence $\varphi_e(k)=f(k)$. Since
there are infinitely many such $n$, there are infinitely many such
$k$, and hence $f$ agrees with a total recursive function on
infinitely many inputs.

(2) implies (3): Let $g \leq_T A$ have the desired properties.
If $f$ dominates every recursive function then $A$ has high
Turing degree. If $g$ is eventually different from all partial
recursive functions then consider any $e$ such that
$\varphi_e(x)=\varphi_x(x)$ for each $x$.
Then a finite modification of $f$ is DNR, hence $A$ has DNR degree.

(3) implies (4): First assume the case that $A$ is of
high Turing degree. Then there is a function $g \leq_T
A$ which grows faster than every recursive function. Taking $U$ to
be the universal machine, then $f(x)$ is the length of the
shortest program $p$ such that $U(p) = x$ within $g(x)$ steps. If
$\tilde C$ is a recursive upper bound of $C$ then let $t(x)$ be
the time to compute $U(p)$ for the fastest program $p$ of length
up to $\tilde C(x)$ with $U(p) = x$. The function $t$ is recursive
and dominated by $g$, thus $f(x) \leq \tilde C(x)$ for almost
all~$x$. So $\tilde C$ dominates $f$.

Second assume the case that $A$ is of DNR Turing degree.
Then by Theorem~\ref{th:complexchar}~(4) there is a function
$g \leq_T A$ such that $C(g(n)) \geq n$ for all $n$.
Without loss of generality $f(n)$ is a string of length
at least $n$ for every $n$. Now define for every $x \in \{0,1\}$
the value $f(x)$ as the maximum of all $m \leq n$ such that
either $m=0$ or $x = g(m)$. Clearly $f$ is unbounded as
$f(g(m)) \geq m$ for all $m$. Furthermore, $f$ is dominated
by $C$ and hence also by all upper bounds $\tilde C$ of $C$.

(4) implies (5): Let $f$ be the given lower bound. Now define
$F(n)$ to be the length-lexicographically first string $x$ with
$f(x)\ge 2n$ which exists by the assumption that $f$ is unbounded.

Let $V$ be recursive with domain $\{0,1\}^*$. There is a constant
$c$ such that $C(V(p)) \leq |p|+c$ for all programs $p$ and $C(x)
\leq |x|+c$ for all $x$. Now let $\tilde C$ be the minimum of
$|p|+c$ for all programs $p$ with either $|p| = |x|$ or $V(p) =
x$. $\tilde C$ is a recursive upper bound for $C$. Thus, for
sufficiently large $n$ and all $p$ of length up to $n$,
$\tilde C(F(n)) \geq 2n>n+c$ and so $V(p) \neq F(n)$.

(5) implies (1): Given any function $h$, define for all strings
$p$ of length $n-1$ that $V(p) = h(n)$. It follows that the
function $F$ differs from $h$ almost always and so (1) is
satisfied.
\end{proof}

\noindent
A similar result can be obtained for the Turing degrees of
Peano complete or high sets.

\begin{theorem}
The following statements are equivalent for any set $A$:
\begin{enumerate}
\item[\rm(1)] $A$ has either high or PA-complete Turing degree;

\item[\rm(2)] $A$ computes a function $B$ such that
$\forall n\,(\,|\{x \in \{0,1\}^*: B(x) = n\}| \leq 2^n)$ and $B$ is
dominated by all recursive upper bounds of $C$;

\item[\rm(3)] $A$ computes a function $F$ mapping every $n$ to a
string of length $n$ such that for every recursive upper bound
$\tilde C$ on $C$ and for almost all $n$, $\tilde C(F(n)) \geq n$;

\item[\rm(4)] $A$ computes a $\{0,1\}$-valued function $g$ such that
for all infinite recursive subsets $R$ of the domain of the diagonal
function, $\forall^{\infty} n \in R\,(g(n) \neq \varphi_n(n))$.

\end{enumerate}
\end{theorem}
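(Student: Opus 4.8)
The plan is to run the cycle $(1)\Rightarrow(2)\Rightarrow(3)\Rightarrow(4)\Rightarrow(1)$. For $(1)\Rightarrow(2)$ I would treat the two cases of the hypothesis separately. If $A$ is high, fix $g_0\le_T A$ dominating all recursive functions and let $B(x)$ be the length of the shortest program found to output $x$ by dovetailing all programs for $g_0(x)$ steps (with a harmless convention on the finitely many $x$ for which none is found); distinct strings with common $B$-value $n$ need distinct length-$n$ programs, so there are at most $2^n$ of them, and for a recursive upper bound $\tilde C$ of $C$ the recursive function $x\mapsto$ (time until dovetailing produces some length-$\le\tilde C(x)$ program for $x$) is dominated by $g_0$, giving $B(x)\le\tilde C(x)$ eventually. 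If $A$ is PA-complete, this is essentially the step $(3)\Rightarrow(1)$ of Theorem~\ref{pacharacterization}: one gets $G\le_T A$ extending the graph of $U$ and single-valued, and $B(x)=\min\{|p|:(p,x)\in G\}$ works, single-valuedness of $G$ yielding $|\{x:B(x)=n\}|\le 2^n$ and $B(x)\le C(x)\le\tilde C(x)$ everywhere.

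For $(2)\Rightarrow(3)$ I would let $F(n)$ be the length-lexicographically least $x\in\{0,1\}^n$ with $B(x)\ge n$; this exists since at most $2^n-1$ strings in all have $B$-value below $n$. Then $F\le_T A$, $F(n)\in\{0,1\}^n$, and since the values $F(n)$ are pairwise distinct, the domination clause of~(2) gives $\tilde C(F(n))\ge B(F(n))\ge n$ for almost all $n$ for every recursive upper bound $\tilde C$. For $(4)\Rightarrow(1)$, if $A$ is high we are done, so suppose $A$ is not high; I claim $A$ is PA-complete. If not, $A$ computes no $\{0,1\}$-valued DNR function, so (finite variants of $g$ being $\le_T A$) the set $D=\{n:\varphi_n(n)\da\in\{0,1\},\ g(n)=\varphi_n(n)\}$ is infinite. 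Let $p(k)$ be least with at least $k+1$ many $n\le p(k)$ satisfying $\varphi_{n,p(k)}(n)\da\in\{0,1\}$ and $g(n)=\varphi_{n,p(k)}(n)$; then $p\le_T g\le_T A$ is total, so by non-highness there is a recursive nondecreasing $q$ with $q(k)\ge p(k)$ infinitely often. Put $R=\{n:\varphi_{n,q(n)}(n)\da\in\{0,1\}\}$, a recursive subset of the domain of the diagonal. For each $k$ with $q(k)\ge p(k)$ the largest witness $n_k\le q(k)$ satisfies $n_k\ge k$, so $q(n_k)\ge q(k)$, whence $n_k\in R$ and $g(n_k)=\varphi_{n_k}(n_k)$; as the $n_k$ tend to infinity, $R$ is infinite and $g$ meets the diagonal infinitely often on $R$, contradicting~(4). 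This is the $\{0,1\}$-valued, recursive-subset-of-the-domain analogue of $(1)\Rightarrow(2)$ in Theorem~\ref{SNRisDNRorhigh}.

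The main obstacle is $(3)\Rightarrow(4)$, which I would approach by reducing to~(1). First, from the $F$ of~(3) the function equal to $|x|$ on $\{x:x=F(|x|)\}$ and to $0$ elsewhere is unbounded and dominated by all recursive upper bounds of $C$, so by Theorem~\ref{SNRisDNRorhigh} the degree of $A$ is high or DNR. If $A$ is high then~(4) holds: take $g(n)=1$ when $\varphi_{n,g_0(n)}(n)\da=0$ and $g(n)=0$ otherwise, using that a $\{0,1\}$-valued function automatically differs from $\varphi_n(n)$ whenever that value lies outside $\{0,1\}$. So assume $A$ is of DNR degree and not high. Since~(3) makes $F$ eventually different from every total recursive function (for recursive $h$, compress $h(n)$ to about $2\log n$ bits and invoke~(3) against the resulting recursive upper bound of $C$), the argument of $(1)\Rightarrow(2)$ in Theorem~\ref{SNRisDNRorhigh} lets us assume, after a finite modification, that $F$ is itself a DNR function; thus $A$ computes a DNR function whose $n$-th value lies in $\{0,1\}^n$ and which is recursively incompressible in the sense of~(3). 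The crux — the step I expect to be genuinely hard — is to extract from such an $F$ a $\{0,1\}$-valued DNR function computed by $A$, giving PA-completeness and hence~(4). The natural strategy is to let $g$ read bits of the $F(m)$ along a block structure, and, confronted with a recursive set $R$ in the domain of the diagonal on which $g$ infinitely often meets the diagonal, to manufacture a recursive upper bound of $C$ that is short precisely on the length-$m$ strings whose relevant bits encode the (now recursive) diagonal values along $R$, thereby compressing infinitely many $F(m)$ below $m$; the delicate point is designing $g$ so that the infinitely many agreements concentrate sufficiently within single blocks for this compression to beat the counting (Kraft) bound that any recursive upper bound of $C$ must obey.
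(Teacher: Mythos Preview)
Your arguments for $(1)\Rightarrow(2)$, $(2)\Rightarrow(3)$, and $(4)\Rightarrow(1)$ are correct and essentially match the paper's; your sketch of $(1)\Rightarrow(4)$ in the high case is also the paper's, and the PA-complete case is standard. The paper, however, does not run a single cycle: it proves $(1)\Rightarrow(2)\Rightarrow(3)\Rightarrow(1)$ and separately $(1)\Rightarrow(4)\Rightarrow(1)$, so it never needs a direct $(3)\Rightarrow(4)$.

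The genuine gap is exactly the step you flag as the crux: from the hypothesis of~$(3)$ together with ``$A$ is DNR and not high'', conclude that $A$ is PA-complete. Your block-reading strategy is only a sketch, and you do not show how to make the compression beat the counting bound. The paper avoids this difficulty entirely by reversing the case split in $(3)\Rightarrow(1)$: assume $A$ is \emph{not} PA-complete and derive that $A$ is high. By the contrapositive of Theorem~\ref{pacharacterization}, the given $F$ then satisfies $C(F(n))<n$ for infinitely many $n$; searching for such $n$ together with a stage of convergence yields an increasing $A$-recursive $d$ such that for every $n$ there is $m\ge n$ with $C_{d(n)}(F(m))<m$. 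For any recursive increasing $h$, the map $x\mapsto C_{h(|x|)}(x)$ is itself a recursive upper bound of $C$, so by~$(3)$ one has $C_{h(m)}(F(m))\ge m$ for almost all $m$; hence $h(n)\ge d(n)$ can hold only finitely often, $d$ dominates every recursive function, and $A$ is high. This is short and exactly parallel to your own $(4)\Rightarrow(1)$ argument, with the time-bounded upper bound $C_{h(|\cdot|)}$ playing the role your set $R$ plays there. The insight you missed is that Theorem~\ref{pacharacterization} already packages the hard ``extract a $\{0,1\}$-valued DNR from $F$'' step, so invoking its contrapositive lets you bypass the construction you were bracing for.
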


\begin{proof}
(1) implies (2): If $A$ has PA-complete degree, then $B$ exists
by Theorem~\ref{pacharacterization}. If $A$ has high degree, then
let $d \leq_T A$ be a function which dominates all
recursive functions and let $B(x) = C_{d(|x|)}(x)$. $B$ is an upper bound
of $C$ and thus satisfies the cardinality condition. Furthermore, if
$\tilde C$ be a recursive upper bound on $C$, then the function
mapping $n$ to the first $s$ such that $C_s(x) \leq \tilde C(x)$
for all $x \in \{0,1\}^*$ with $|x| \leq n$ is recursive and thus
dominated by $d$. It follows that $B(x) \leq \tilde C(x)$ for
almost all $x$ and so (2) is satisfied.

(2) implies (3): Take $B$ as specified for (2) and let $F(n)$ be
the lexicographic first string $x$ of length $n$ with $B(x) \geq n$.
This string exists since there are at most $2^n-1$ many strings
$y$ with $B(y) < n$. Note that $F \leq_T A$ since $B \leq_T A$.
Since every recursive upper bound $\tilde C$ dominates $B$,
condition (3) is satisfied.

(3) implies (1): Assume that $A$ does not have PA-complete Turing
degree. Then by Theorem~\ref{pacharacterization}, there are infinitely
many $n$ with $C(F(n)) < n$. Then there is an increasing $A$-recursive
function $d$ such that for all $n$ there is $m \geq n$ with $C_{d(n)}(F(m)) < m$.
In particular, if $h$ is also increasing and $h(n) \geq d(n)$ infinitely
often, then $C_{h(m)}(F(m)) < m$ for infinitely many $m$. So the mapping
from $x$ to $C_{h(|x|)}(x)$ cannot be recursive and $h$ cannot be a
recursive function. Thus $d$ dominates every recursive function
and $A$ has high Turing degree.

(1) implies (4): If $A$ has PA-complete degree then it is well-known
that there is a $\{0,1\}$-valued DNR function $g \leq_T A$.
If $A$ has high degree then one can again take an
$A$-recursive function $d$ dominating all recursive ones and consider
the function $g$ with
$g(n) = 1 \Leftrightarrow \varphi_{n,d(n)}(n) \da = 0$.
If $R$ is a recursive subset of the domain of the diagonal function
then $d$ dominates the time which $\varphi_n(n)$ needs to converge
on inputs from $R$ and thus $g(n) = \varphi_n(n)$ only for finitely
many $n$ in $R$.

(4) implies (1): This is similar to the implication from (3) to (1).
Assume that $A$ does not have PA degree. Let $g \leq_T A$ be
$\{0,1\}$-valued. There are infinitely many $n$ in the domain of the diagonal
function with $g(n) = \varphi_n(n)$. There is an increasing $A$-recursive
function $d$ such that for all $n$ there is $m \geq n$ with
$g(m)=\varphi_{m,d(n)}(m)$. For given increasing, recursive function $h$
let $R = \{n: \varphi_{n,h(n)}(n) \da\}$. Whenever $h(n) \geq d(n)$
then there is $m \geq n$ with $\varphi_{m,d(n)}(m) \da = g(m)$.
Furthermore, $m \in R$ since $d(n) \leq h(n) \leq h(m)$. Since
$g$ is correct on only finitely many elements of $R$, $h(n) < d(n)$ for
almost all $n$ and $d$ dominates every recursive function. Thus $A$
has high Turing degree.
\end{proof}

\begin{theorem}
If a set $A$ has high Turing degree then there is a function $F \leq_T A$
mapping every $n$ to a string of length $n$ such that
\[
\forall x \in \{0,1\}^n\,(H(x) \leq \tilde H(F(n)))
\]
for every recursive upper bound $\tilde H$ of $H$ and almost all $n$.
\end{theorem}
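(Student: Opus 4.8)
The plan is to mimic the structure of the high-degree cases appearing in Theorems~\ref{SNRisDNRorhigh} and the preceding theorem, but now working with the $K$-recursive function $H$ instead of the $K$-recursive function $C$ and exploiting that $H$ itself is the pointwise infimum of a uniformly recursive family of upper bounds $H_s$. Since $A$ has high Turing degree, fix a function $d\le_T A$ that dominates every recursive function. The key observation is that $H(x) = \lim_s H_s(x)$ where $H_s(x)$ is the length of the shortest program halting within $s$ steps on the prefix-free universal machine, and that for each length $\ell$ the function $n\mapsto$ ``first stage $s$ by which $H_s(x)=H(x)$ for all $x$ with $|x|\le n$'' is recursive. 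Hence $d$ dominates it, and $B(x) := H_{d(|x|)}(x)$ is an $A$-recursive function with $B(x)=H(x)$ for almost all $x$ (indeed for all $x$ whose length exceeds some threshold).

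Given this $B$, I would define $F(n)$ to be the length-lexicographically first string $x\in\{0,1\}^n$ maximizing $B(x)$ among strings of length $n$; clearly $F\le_T B\le_T A$ and $F(n)\in\{0,1\}^n$. Now fix a recursive upper bound $\tilde H$ of $H$ and an arbitrary $x\in\{0,1\}^n$. For all sufficiently large $n$ we have $B(F(n)) = H(F(n))$ and $B(x)=H(x)$, the latter because once $n$ exceeds the threshold, $B$ agrees with $H$ on every string of length $n$ simultaneously. By maximality of $F(n)$ under $B$ we get $B(x)\le B(F(n))$, so $H(x)\le H(F(n))\le \tilde H(F(n))$, which is the desired inequality. (One subtlety: the threshold past which $B=H$ on length-$n$ strings is only known to exist, not to be computable, but that is harmless since the conclusion is only required for almost all $n$.)

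The main obstacle to watch is the uniformity claim underlying $B$: I need that for each $n$ the quantity $s(n) = \mu s\,\forall x\in\{0,1\}^{\le n}\,(H_s(x)=H(x))$ is well-defined as a recursive function's \emph{upper bound is dominated}, i.e. I do not need $s(n)$ itself recursive, only that \emph{some} recursive function eventually exceeds it so that $d$ does too. Concretely, $s'(n) = \mu s\,\forall x\in\{0,1\}^{\le n}\,(H_s(x)\le \tilde H(x))$ is recursive for the \emph{fixed} $\tilde H$ under consideration; but I want a single $B$ that works for \emph{all} $\tilde H$ at once, so instead I should argue directly: the set $\{x : H_s(x)\neq H(x)\}$ is finite for each $s$ and shrinks to $\emptyset$, so $\mathrm{maxlen}(s) := \max\{|x| : H_s(x)\neq H(x)\}$ (with value $0$ if the set is empty) is a recursive function of $s$ whose inverse-style bound is recursive, and one checks that for each $n$ there is a recursive $s(n)$ with $\mathrm{maxlen}(s(n))<n$ — this is where one must be slightly careful, as $\mathrm{maxlen}$ need not be monotone and $H_s(x)\to H(x)$ is not effectively so. The clean fix is to note $H_s(x)\ge H_{s+1}(x)\ge\cdots\ge H(x)$ is nonincreasing in $s$, so for each $n$ the stage $s(n)=\mu s\,[\sum_{|x|\le n}2^{-H_s(x)} \ge \sum_{|x|\le n}2^{-H(x)} - 2^{-(n+2)}]$ is not obviously recursive either; rather, the correct and simplest route is: $H(x)=\min_s H_s(x)$ and for each fixed $N$, the function $s\mapsto (H_s(x))_{|x|\le N}$ is eventually constant, so there exists a \emph{recursive} function $t$ with $H_{t(n)}(x)=H(x)$ for all $|x|\le n$ — because the enumeration of halting programs is itself recursive, one can compute $t(n)$ as the first stage by which every program of length $<$ (the provable bound $n+c$ on $H$ for length-$\le n$ strings, from $H(x)\le 2|x|+c$) that ever halts has already halted, and there are only finitely many such programs. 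With $t$ recursive, $d$ dominates $t$, and setting $B(x)=H_{d(|x|)}(x)$ finishes the argument.
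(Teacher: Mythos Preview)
Your construction of $F$ is exactly the paper's: pick a dominating $d\le_T A$ and let $F(n)$ be a string of length $n$ maximizing $H_{d(n)}$ on $\{0,1\}^n$. The gap is in the justification. Your ``clean fix'' at the end claims a recursive $t$ with $H_{t(n)}(x)=H(x)$ for all $|x|\le n$; no such $t$ exists, since it would make $H$ itself recursive via $H(x)=H_{t(|x|)}(x)$. The description ``first stage by which every program of length below the bound that ever halts has already halted'' is precisely an oracle query to the halting problem, not a recursive computation. Consequently your stronger claim, that $B(x)=H_{d(|x|)}(x)$ agrees with $H(x)$ for all $x$ of sufficiently large length, need not hold: the convergence-stage function $n\mapsto\min\{s:\forall x\in\{0,1\}^{\le n}\,H_s(x)=H(x)\}$ is $\emptyset'$-recursive, not recursive, and a dominating $d\le_T A$ has no reason to dominate it for an arbitrary high $A$.

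The fix is the very observation you wrote down and then discarded. You do \emph{not} need $B=H$ eventually; $F$ is already defined uniformly in $A$ with no reference to $\tilde H$, and the statement only asks that the inequality hold for almost all $n$, where the threshold may depend on $\tilde H$. For a fixed recursive upper bound $\tilde H$, your $s'(n)=\mu s\,\forall x\in\{0,1\}^n\,(H_s(x)\le\tilde H(x))$ is recursive, hence dominated by $d$. For $n$ with $d(n)\ge s'(n)$ one gets $H_{d(n)}(F(n))\le\tilde H(F(n))$, while for every $x\in\{0,1\}^n$ and every $n$ one has $H(x)\le H_{d(n)}(x)\le H_{d(n)}(F(n))$ by monotonicity and the choice of $F(n)$. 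Chaining these gives $H(x)\le\tilde H(F(n))$ for almost all $n$. This is precisely the paper's argument.
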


\begin{proof}
Let $A$ have high degree. There is a function
$f \leq_T A$ which dominates all recursive functions.
Let $F(n)$ be the lexicographically first $x \in \{0,1\}^n$ for
which $H_{f(n)}(x)$ is maximal. Now, let $\tilde H$ is a recursive upper
bound on $H$. The function mapping $n$ to the first $s$ such that
$H_s(x) \leq \tilde H(x)$
for all $x \in \{0,1\}^n$ is recursive and thus
dominated by $f$. It follows that $H_{f(n)}(F(n)) \leq \tilde H(F(n))$
for almost all $n$. On the other hand, $H(x) \leq H_{f(n)}(F(n))$
for all $n$, so the statement of the theorem is satisfied in the case
that $A$ has high Turing degree.
\end{proof}

\section{R.e.\ traceable sets}

\noindent
It was shown in \cite{Kjos-Hanssen.Nies.ea:05} that a set $A$ is
r.e.\ traceable if and only if every Martin-L\"of random set is Schnorr
random relative to $A$. We remind the reader of the definitions. (Recall that $W_n$ is the $n$th r.e.\ set, and $D_n$ the finite set with canonical index $n$.)

\begin{definition}
A set $A$ is \emph{r.e.\ traceable} if there is a recursive function $p$ such that for all $f\le_T A$, there is a recursive function $g$ such that for all $n$, $f(n)\in W_{g(n)}$ and $W_{g(n)}$ has at most $p(n)$ elements. Similarly, $A$ is \emph{recursively traceable} if the same statement holds with the r.e.\ set $W_{g(n)}$ replaced by the canonically finite set $D_{g(n)}$.
\end{definition}

We now characterize r.e.\ traceable sets as being ``uniformly very far from DNR''. A similar characterization holds for recursively traceable sets, and shows that recursively traceable sets compute no eventually different function.

\begin{theorem}\label{oct1-2004}
The following statements are equivalent for any set $A$:
\begin{enumerate}
\item[\rm(1)] $A$ is r.e.\ traceable.

\item[\rm(2)] There is a fixed recursive function $z(n)$ such that for
each $f\le_T A$, and almost every $n$, the set
$\{x:f(x)=\varphi_x(x)\}$ has at least $n$ elements below $z(n)$.
\end{enumerate}

The following statements are equivalent for any set $A$:
\begin{enumerate}
\item[\rm(3)] $A$ is recursively traceable.

\item[\rm(4)] There is a fixed recursive function $g$ such that for
each $f\le_T A$, there is a recursive function $\varphi_e$ such
that for almost every $n$, the set $\{x:f(x)=\varphi_e(x)\}$
has at least $n$ elements below $g(n)$.
\end{enumerate}
\end{theorem}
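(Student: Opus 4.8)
The plan is to prove the two equivalences separately but by the same scheme, exploiting the analogy between r.e.\ traces and the index set $\{x : f(x) = \varphi_x(x)\}$, which can be enumerated once $f$ is given as an oracle. For $(1)\Rightarrow(2)$, fix the recursive bound $p$ witnessing r.e.\ traceability and let $z(n) = $ an appropriately recursive bound derived from $p$ (roughly $z(n)$ large enough that one may ``reserve'' indices). Given $f \le_T A$, I would not trace $f$ directly; instead I would define an auxiliary $A$-recursive function that encodes finitely much information about the agreement pattern of $f$ with the diagonal function, apply the trace to it, and use the smallness of the trace to argue that $f$ must agree with $\varphi_x(x)$ on many $x$ below $z(n)$. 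The key device is a padding/$s$-$m$-$n$ argument: from $f$ one builds a recursive enumeration of $D = \{x : \varphi_x(x)\!\downarrow \}$ together with the values $\varphi_x(x)$, and one observes that an r.e.\ trace of small cardinality for the relevant $A$-recursive function forces the diagonal to be ``hit'' often, since otherwise $f$ would itself be (a finite variant of) a DNR function, contradicting that r.e.\ traceable sets compute no DNR function.

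For $(2)\Rightarrow(1)$, I would argue the contrapositive in the natural form: if $A$ is not r.e.\ traceable, then for every recursive $p$ there is $f \le_T A$ escaping all r.e.\ traces bounded by $p$; from such an $f$ I would construct, uniformly, a function $\tilde f \le_T A$ for which the agreement set $\{x : \tilde f(x) = \varphi_x(x)\}$ is forced to be sparse below any prescribed recursive $z$, contradicting (2). Concretely, given a candidate $z$, I would use the failure of traceability at a cardinality bound tailored to $z$ to diagonalize: enumerate the possible small agreement patterns (there are few, since the trace would be small) and have $\tilde f$ avoid each, which is possible because $f$ escapes the corresponding r.e.\ trace. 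This is the standard ``a set that is not traceable can defeat any fixed polynomial-size guessing strategy'' maneuver, transported to the diagonal-agreement setting.

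The equivalence $(3)\Iff(4)$ is handled by literally the same proof with the canonically finite sets $D_{g(n)}$ in place of the r.e.\ sets $W_{g(n)}$; the only change is that the trace is now \emph{recursive} rather than r.e., which is exactly why in (4) one gets a single recursive function $\varphi_e$ approximating the agreement (the recursive trace can be read off outright) whereas in (2) one only controls the true diagonal function. I would phrase the recursively traceable case as a corollary of the r.e.\ case, remarking that relativizing the enumeration to a recursive trace collapses the ``for almost every $n$'' quantifier appropriately. The closing sentence would note the promised consequence: since every recursive upper bound argument as in Theorem~\ref{SNRisDNRorhigh}(4) fails badly here, a recursively traceable set computes no eventually different function.

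The main obstacle I anticipate is getting the \emph{uniformity} right in $(1)\Rightarrow(2)$: the bound $z(n)$ must be a single recursive function that works for \emph{all} $f \le_T A$ simultaneously, so it has to be extracted purely from the fixed trace-bound $p$ together with the fixed recursive machinery of the $s$-$m$-$n$ and padding functions, with no dependence on the reduction computing $f$. Pinning down the exact dependence of $z$ on $p$ — and checking that the enumeration of the agreement set really does land inside an r.e.\ set of cardinality at most $p(\cdot)$ after the encoding — is the delicate bookkeeping step, and it is where the constants in the definition of r.e.\ traceability have to be spent carefully.
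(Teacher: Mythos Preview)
Your plan has two genuine gaps, one in each direction of the first equivalence.

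For $(1)\Rightarrow(2)$: the sentence ``otherwise $f$ would itself be (a finite variant of) a DNR function, contradicting that r.e.\ traceable sets compute no DNR function'' only yields, for each $f\le_T A$, \emph{some} $x$ with $f(x)=\varphi_x(x)$; it gives neither the uniform recursive bound $z$ nor the density ``at least $n$ many below $z(n)$''. The actual mechanism is different from what you sketch: you do not encode an agreement pattern and trace \emph{that}. Instead, fix primitive recursive indices $r(i,e,n)$ with $\varphi_{r(i,e,n)}(r(i,e,n))\simeq$ the $i$th element enumerated into $W_{\varphi_e(n)}$, set $q(n)=\max\{r(i,e,n):i<p(n),\,e<n\}$ (recursive, independent of $f$), and trace $f\circ q$. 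If $g=\varphi_e$ is the resulting trace, then for $n>e$ the value $f(q(n))$ must equal some $\varphi_{r(i,e,n)}(r(i,e,n))$ with $r(i,e,n)\le q(n)$; this is a uniform failure of SDNR, which via the proof of Jockusch's theorem converts to the statement in~(2). Your ``auxiliary $A$-recursive function encoding the agreement pattern'' is backwards: the point is to \emph{plant} trace elements as diagonal values, not to record agreements that are yet to be produced.

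For $(2)\Rightarrow(1)$: the contrapositive is the hard direction here, and your diagonalization sketch conflates traces (small sets of \emph{values}) with agreement patterns (sets of \emph{indices}); ``have $\tilde f$ avoid each small agreement pattern'' is not something escaping an r.e.\ trace lets you do. The direct argument is short and you are missing its key trick: given $f\le_T A$, apply~(2) not to $f$ but to $\hat f(x)=\langle f(0),\ldots,f(x)\rangle$. Then for almost all $y$ there are $y{+}1$ many $x<z(y{+}1)$ with $\hat f(x)=\varphi_x(x)$, hence at least one such $x\ge y$, and $f(y)$ is the $y$th coordinate of $\varphi_x(x)$. Thus $T_y=\{(\varphi_x(x))_y : y\le x<z(y{+}1)\}$ is an r.e.\ trace for $f$ of size at most $z(y{+}1)$. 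This one idea---tupling $f$ so that a single diagonal hit reveals an entire initial segment---is what makes the direction go, and it is absent from your plan.

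For $(3)\Leftrightarrow(4)$ your remark that the same proof goes through with $D_{g(n)}$ in place of $W_{g(n)}$ is correct, once the above constructions are in place.
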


\begin{proof}
(1) implies (2): Suppose $A$ is r.e. traceable via the recursive
function $p(n)$, and let $f\le_T A$. Let
\[
q(n)=\max\{r(i,e,n)\mid i<p(n),e<n\}
\] 
where $r(i,e,n)$ is primitive
recursive such that 
\[
	\varphi_{r(i,e,n)}(r(i,e,n))\simeq\\ \text{ the
$i$th member of }W_{\varphi_e(n)}\text{ in order of
enumeration.} 
\]  
Since $f\le_TA$ and $q$ is recursive, $f\circ q\le_TA$,
so let $g(n)$ be recursive such that for all $n$, $f(q(n))\in W_{g(n)}$
and $|W_{g(n)}|\le p(n)$.  Let $e$ be an index of $g$, i.e.,
$\varphi_e(n)=g(n)$ for all $n$.

Suppose for all $n>e$ we have
$f(q(n))\ne\varphi_{r(i,e,n)}(r(i,e,n))\simeq$ the $i$th member
of $W_{\varphi_e(n)}=W_{g(n)}$, for all $i<p(n)$.  Since
$|W_{g(n)}|\le p(n)$, it follows that $f(q(n))\notin W_{g(n)}$, a
contradiction. So $f(q(n))$ must have been equal to
$\varphi_{r(i,e,n)}(r(i,e,n))$ for some $i<p(n)$ and all $n>e$. But
this gives a bounding function $z(n)$ witnessing that $f$ is not SDNR.
This is now easily translated into a bounding function witnessing that
any function recursive in $A$ is not even DNR, via the proof of Theorem \ref{nov192006}. 

(2) implies (1): Given $f\le_T A$, let $\hat f(x)=(f(0),\ldots,f(x))$.
By assumption $\hat f(x)=\varphi_x(x)$ for $y+1$ many $x$ below
$z(y+1)$. Hence there is such an $x$ with $x\ge y$, and so 
\[
	f(y)\in T_y:=\{(\varphi_x(x))_y: x\ge y, x<z(y+1)\},
\]
for almost every
$y$. So by
modifying the trace finitely, it holds for every $y$. The size of
$T_y$ is bounded by ${z(y+1)}$, since $T_y$ contains at most one number for each $x<z(y+1)$. 

(3) implies (4): The above argument gives the conclusion
that no recursively traceable set computes a function that agrees
with each recursive function only finitely often. Indeed, if
$W_{g(n)}$ has recursive size then we can bound the running time of
$\varphi_{r(i,e,n)}$ to produce a partial recursive function with
recursive domain, which hence has a total recursive extension.

(4) implies (3): Follows the proof that (2) implies (1). If
$(f(0),\ldots,f(x))=\varphi_e(x)$ then
\[
	f(y)\in T_y:=\{(\varphi_e(x))_y: x\ge y, x<g(y+1)\},
\]
for almost every
$y$, and the size of $T_y$ is bounded by ${g(y+1)}$.
\end{proof}

\section*{Acknowledgments}

\noindent
The authors would like to thank Cristian S.\ Calude and Andr\'e Nies, 
for correspondence and helpful comments to improve the proof
of Theorem~\ref{th:calude}; and Stephen G.~Simpson, for the idea to use SDNR functions to express the
proof of Theorem~\ref{oct1-2004}; and A.\ Khodyrev, who pointed out that one can give an easier proof of the equivalence of complex and autocomplex with DNR by going via SDNR functions. 

\bibliographystyle{amsplain}
\begin{bibdiv}
\begin{biblist}

\bib{AKLS}{article}{
   author={Ambos-Spies, Klaus},
   author={Kjos-Hanssen, Bj{\o}rn},
   author={Lempp, Steffen},
   author={Slaman, Theodore A.},
   title={Comparing DNR and WWKL},
   journal={J. Symbolic Logic},
   volume={69},
   date={2004},
   number={4},
   pages={1089--1104},
   issn={0022-4812},
   review={\MR{2135656 (2006c:03061)}},
}

\bib{Muchnik:04}{article}{
   author={Beigel, Richard},
   author={Buhrman, Harry},
   author={Fejer, Peter},
   author={Fortnow, Lance},
   author={Grabowski, Piotr},
   author={Longpre, Luc},
   author={Muchnik, Andrej},
   author={Stephan, Frank},
   author={Torenvliet, Leen},
   title={Enumerations of the Kolmogorov function},
   journal={J. Symbolic Logic},
   volume={71},
   date={2006},
   number={2},
   pages={501--528},
   issn={0022-4812},
   review={\MR{2225891 (2007b:68086)}},
}

\bib{Calude:05}{unpublished}{,
	author={Cristian S.\ Calude}, 
	note={Private communication}, 
	year={2005},
}

\bib{GreenbergMiller}{article}{
   author={Greenberg, Noam},
   author={Miller, Joseph S.},
   title={Lowness for Kurtz randomness},
   journal={J. Symbolic Logic},
   volume={74},
   date={2009},
   number={2},
   pages={665--678},
   issn={0022-4812},
   review={\MR{2518817 (2010b:03050)}},
}

\bib{Jockusch:89}{article}{
   author={Jockusch, Carl G., Jr.},
   title={Degrees of functions with no fixed points},
   conference={
      title={Logic, methodology and philosophy of science, VIII},
      address={Moscow},
      date={1987},
   },
   book={
      series={Stud. Logic Found. Math.},
      volume={126},
      publisher={North-Holland},
      place={Amsterdam},
   },
   date={1989},
   pages={191--201},
   review={\MR{1034562 (91c:03036)}},
}

\bib{Kanovich:70}{article}{
   author={Kanovi{\v{c}}, M. I.},
   title={The complexity of the enumeration and solvability of predicates},
   language={Russian},
   journal={Dokl. Akad. Nauk SSSR},
   volume={190},
   date={1970},
   pages={23--26},
   issn={0002-3264},
   review={\MR{0262084 (41 \#6694)}},
}

\bib{Kanovich:69}{article}{
   author={Kanovi{\v{c}}, M. I.},
   title={The complexity of the reduction of algorithms},
   language={Russian},
   journal={Dokl. Akad. Nauk SSSR},
   volume={186},
   date={1969},
   pages={1008--1009},
   issn={0002-3264},
   review={\MR{0262082 (41 \#6692)}},
}

\bib{Kjos-Hanssen.Nies.ea:05}{article}{
   author={Kjos-Hanssen, Bj{\o}rn},
   author={Nies, Andr{\'e}},
   author={Stephan, Frank},
   title={Lowness for the class of Schnorr random reals},
   journal={SIAM J. Comput.},
   volume={35},
   date={2005},
   number={3},
   pages={647--657 (electronic)},
   issn={0097-5397},
   review={\MR{2201451 (2006j:68051)}},
   doi={10.1137/S0097539704446323},
}

\bib{LiVitanyi2}{book}{
   author={Li, Ming},
   author={Vit{\'a}nyi, Paul},
   title={An introduction to Kolmogorov complexity and its applications},
   series={Graduate Texts in Computer Science},
   edition={2},
   publisher={Springer-Verlag},
   place={New York},
   date={1997},
   pages={xx+637},
   isbn={0-387-94868-6},
   review={\MR{1438307 (97k:68086)}},
}

\bib{LiVitanyi3}{book}{
   author={Li, Ming},
   author={Vit{\'a}nyi, Paul},
   title={An introduction to Kolmogorov complexity and its applications},
   series={Texts in Computer Science},
   edition={3},
   publisher={Springer},
   place={New York},
   date={2008},
   pages={xxiv+790},
   isbn={978-0-387-33998-6},
   review={\MR{2494387 (2010c:68058)}},
}

\bib{Miller:02}{thesis}{
	author={Joseph~Stephen Miller},
	title={Pi-0-1 classes in computable analysis and topology},
	note={PhD thesis, Cornell University}, 
	year={2002},
}

\bib{Nies:04}{unpublished}{
	author={Andr\'e Nies},
	note={Private communication},
	year={2004},
}

\bib{StephanYu}{article}{
   author={Stephan, Frank},
   author={Yu, Liang},
   title={Lowness for weakly 1-generic and Kurtz-random},
   conference={
      title={Theory and applications of models of computation},
   },
   book={
      series={Lecture Notes in Comput. Sci.},
      volume={3959},
      publisher={Springer},
      place={Berlin},
   },
   date={2006},
   pages={756--764},
   review={\MR{2279138 (2007h:03073)}},
}

\end{biblist}
\end{bibdiv}

\end{document}